\documentclass{article}
\usepackage{amsmath,amsthm,amssymb}
\usepackage[all]{xy}

\newtheorem{theor}{Theorem}
\newtheorem{propo}{Proposition}
\newtheorem{lem}{Lemma}
\newtheorem{coro}{Corollary}
\theoremstyle{definition}
\newtheorem{defi}{Definition}
\newtheorem{rema}{Remark}

\newcommand{\Z}{\mathbb Z}
\newcommand{\V}{{\mathcal V}}
\newcommand{\E}{{\mathcal E}}
\newcommand{\eps}{\epsilon}
\newcommand{\wt}{\widetilde}
\newcommand{\ot}{\leftarrow}
\newcommand{\ol}{\overline}
\newcommand{\cor}{\mathrm{corank} }
\newcommand{\rank}{\mathrm{rank} }
\newcommand{\sgn}{\mathrm{sgn}}
\newcommand{\Kh}{\mathrm{\overline{Kh}}}
\newcommand{\ofo}{\Omega^{PU}_2}

\newcommand{\tar}[2]{\begin{array}{c} #1 \\ \mbox{#2}\end{array}}
\newcommand{\bega}{\left(\begin{array}}
\newcommand{\ena}{\end{array}\right)}

\title{Odd Khovanov homology of principally unimodular bipartite graph-links}
\author{Igor Nikonov}
\date{}

\begin{document}
\maketitle

\begin{abstract}
We define odd Khovanov homology over $\Z$ for principally unimodular
bipartite graph-links.
\end{abstract}

\section{Introduction}

This article is a sequel of our paper~\cite{N} and describes the
integral version of odd Khovanov homology for graph-links. We refer
the reader to papers~\cite{IM,IM1} for the definition of graph-links
and to papers~\cite{Blo,ORS} for the construction of odd Khovanov
homology.

The definition of integer-valued odd Khovanov homology for
graph-links faces the difficulty that the signs of the integer
intersection matrix of an oriented chord diagram does not survive in
general after mutations. But we can keep the signs if one is able to
distinguish outer and inner chords. This reason confines us to
bipartite graph-links. There is another restriction, which is
necessary to retain the orientation after Reidemeister moves, ---
{\em principal unimodularity}~\cite{BCG}. So the definition of
integer odd Khovanov homology is given for bipartite principally
unimodular graph-links.

Bipartite principally unimodular graph-links can be considered as
"classical" graph-links because the realizable graphs of this type
are intersection graphs of chord diagrams that correspond to
classical links. The question is whether this class of graph-links
gives the "natural" definition of classical graph-links. Another
problem is if the theory is meaningful, i.e. does there exist
non-realizable bipartite principally unimodular graph-link.

\section{Principally unimodular bipartite graph-links}

In this section we define an orientable version of the Reidemeister
on oriented bipartite graphs.

Let G be an oriented bipartite graph without loops and multiple
edges and $\V=\V(G)$ be the set of its vertices. We assume $G$ be a
{\em labeled} graph, i.e. every vertex in $G$ is endowed with sign
'+' or '-'. In other words, there is a map $\sgn : \V\to \{-1,1\}$.

Fix an enumeration of vertices for $G$. We define the {\em adjacency
matrix} $A(G)=(a_{ij})_{i,j=1,\dots,n}$ over $\Z_2$ as follows:
$a_{ij}=1$ and $a_{ji}=-1$ and if and only if $v_i$ it the beginning
and $v_j$ is the end of an edge in the graph $G$ (we shall denote
this situation as $v_i\to v_j$) , and $a_{ij}=0$ $v_i$ and $v_j$ are
not adjacent. Besides we set $a_{ii}=0$.

Any subset $s\subset \V$ we shall call a {\em state}. Let's define
$G(s)$ to be the complete subgraph in $G$ with the set of vertices
$s$ and denote $A(s)=A(G(s))$. Since $G$ is bipartite the set of
vertices splits into a disjoint sum $\V=\V_0\sqcup\V_1$ and for
every state $s$

\begin{equation}
A(s) =\bega{cc} 0& B(s)\\ -B(s)^\top& 0\ena
\end{equation}
where the rows of the matrix $B(s)$ correspond to the vertices in
$s\cap\V_0$ and the columns correspond to the vertices in
$s\cap\V_1$.

Let $v\in \V$. The set of all vertices in $\V$ adjacent to $v$ is
called {\em neighbourhood} of the vertex $v$ and denoted $N(v)$.

Let us define the Reidemeister moves for labeled oriented bipartite
graphs.

$R$. For a given vertex $v\in\V(G)$ we change the direction of all
the edges incident to $v$.

$\Omega_1$. The first Reidemeister move is an addition/removal of an
isolated vertex labeled $+$ or $-$.

$\Omega_2$. The second Reidemeister move is an addition/removal of
two nonadjacent vertices  $u$ and $v$ having the different signs and
the same neighbourhoods so that the new graph remain bipartite. We
require the orientations of the new edges to be compatible: for any
vertex $w\in\V(G)$ we have $u\to w$ (resp. $u\ot w$) if and only if
$v\to w$ (resp. $v\ot w$).

$\Omega_3$. The third Reidemeister move is defined as follows. Let
$u, v, w$ be three vertices of $G$ with signs '-' and $u$ be
adjacent only to $v$ and $w$ so that $u\to v$ and $u\to w$. Then we
disconnect $u$ from $v$ and $w$. We set $u\to t$ (resp. $u\ot t$)
for all $t$ such that $v\to t$ (resp. $v\ot t$) and  set $u\to t$
(resp. $u\ot t$) if $w\ot t$ (resp. $w\to t$). In addition, we
change the labels of $v$ and $w$ to '+'. The inverse operation is
also called the third Reidemeister move.

$\Omega_4$. The fourth Reidemeister move is defined as follows. We
take two adjacent vertices $u$ labeled $a$ and $v$ labeled $b$. Then
we change the label of $u$ to $-b$ and the label of  $v$ to $-a$ and
change also the orientation of the edge $uv$. After that we change
the adjacency for each pair $(t,w)$ of vertices where $t\in N(u)$
and $w\in N(v)$. We set the orientation of a new edge $tw$ so that
the square $utwv$ be {\em even}, i.e. the number of codirectional
edges in the round $utwv$ be even (see examples below).
\begin{gather*}
\tar{\xymatrix{ u\ar[r] & v\ar[d]\\
t\ar[u] & w\ar[l]}}{even square}\quad
\tar{\xymatrix{ u\ar[r] & v\\
t\ar[r]\ar[u] & w\ar[u]}}{even square}\quad
\tar{\xymatrix{ u\ar[r] & v\ar[d]\\
t\ar[r]\ar[u] & w}}{odd square}
\end{gather*}

\begin{propo}
Let $G$ be an oriented bipartite labeled graph and $\wt G$ differ
from $G$ by orientation of edges. Then we can obtain $\wt G$ by
applying the moves $\Omega_2$ and $\Omega_4$ to the graph $G$.
\end{propo}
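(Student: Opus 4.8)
The plan is to reduce the statement to a single elementary operation and then to synthesize that operation from $\Omega_2$ and $\Omega_4$. Since $\wt G$ and $G$ share the same vertex set, the same labels, and the same underlying unoriented edges, their difference is recorded by the set $F\subseteq\E(G)$ of edges whose orientation is reversed. I would first argue that it suffices to treat the case where $F$ is a single edge $e=uv$: if I can reverse one prescribed edge while leaving the underlying graph, all labels, and every other edge orientation untouched, then composing finitely many such operations realizes an arbitrary $F$, and hence produces $\wt G$ from $G$.

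The first point to establish is that the vertex reversal $R$ at a vertex $x$ — which flips exactly the edges incident to $x$ — is itself expressible through $\Omega_2$ and $\Omega_4$, since $R$ is not among the permitted moves. In the block form (1) such a reversal acts on $B(s)$ by negating a single row or column, i.e.\ by $B\mapsto D_0BD_1$ with $D_0,D_1$ diagonal sign matrices; these are precisely the orientation changes coming from cuts. I would realize one $R$ by using $\Omega_2$ to attach an auxiliary twin pair near $x$, performing a single $\Omega_4$ pivot, and then deleting the pair again by $\Omega_2$, verifying that the net effect on $G$ is exactly the reversal at $x$ and that the auxiliary labels cancel.

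The role of $\Omega_4$ is that a single reversal only produces cut-type (rank-one) sign patterns, whereas a single-edge reversal is generally not of this form; on a $4$-cycle, for instance, flipping one edge alters an odd number of edges around the cycle and so is not a cut. To manufacture non-cut changes I would conjugate an $R$ (now available from the previous step) by a pivot. Writing $P_{uv}$ for the $\Omega_4$ pivot on $uv$, the pivot changes $B$ by the principal pivot transform, whose off-block entries satisfy $B'_{tw}=B_{tw}-B_{tv}B_{uv}^{-1}B_{uw}$; this is exactly the toggling of edges between $N(u)$ and $N(v)$ demanded by $\Omega_4$, with the even-square rule fixing the signs. Because $P_{uv}$ is an involution on the underlying graph and sends the labels $a,b$ of $u,v$ to $-b,-a$ and back, the composite $P_{uv}\circ R\circ P_{uv}$ restores both the graph and the labels, while transporting the rank-one reversal through the pivot turns it into a more general sign change; choosing $u,v$ and the reversed vertex suitably, I would arrange the surviving change to be the reversal of the single edge $e$.

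The main obstacle is the sign bookkeeping rather than the mod-$2$ pattern of which edges change. Each application of $\Omega_4$ simultaneously toggles a whole block of edges, assigns their orientations by the even-square condition, and alters two labels, so I must check that in the sandwiches above the label changes cancel in pairs and that every intermediate matrix remains a genuine signed adjacency matrix with entries in $\{-1,0,1\}$. This is exactly where principal unimodularity enters: it guarantees that the relevant principal minors are $\pm1$, so the pivot transforms are defined over $\Z$ with no denominators and the even-square orientation assignment is forced and consistent. Proving that these side effects cancel precisely — leaving only the intended orientation reversal — is the crux, and I expect it to be the most delicate part of the argument.
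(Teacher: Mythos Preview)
Your plan rests on a misreading of the hypotheses. The proposition is stated for an \emph{arbitrary} oriented bipartite labeled graph, with no principal unimodularity assumed; indeed, its whole purpose in the paper is to show that without the PU constraint the orientation carries no information, which is why PU is introduced immediately afterwards. So principal unimodularity cannot ``enter'' to rescue the sign bookkeeping --- it is not available, and it is not needed. The move $\Omega_4$ is defined combinatorially (toggle adjacency on $N(u)\times N(v)$, orient new edges by the even-square rule), so there are no denominators or integrality issues to worry about regardless of PU.

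More importantly, your route through first realizing $R$ and then conjugating by a pivot is both circuitous and unverified at the crucial points: you do not check that the ``twin pair plus one pivot'' really yields $R$, nor that $P_{uv}\circ R_z\circ P_{uv}$ can be tuned to flip exactly one prescribed edge. The paper's argument avoids all of this. To flip the single edge $uv$, it adds via $\Omega_2$ two vertices $w,w'$ with $N(w)=N(w')=\{v\}$, then two more vertices $t,t'$ with $N(t)=N(t')=\{u,w\}$ chosen so that the square $uvwt$ is \emph{odd} (note: this deliberately leaves the PU world, which is exactly the freedom the general $\Omega_2$ allows). One then applies $\Omega_4$ twice at the pair $w,t$: the underlying adjacencies return to what they were, but the parity of the square $uvwt$ forces the orientation of $uv$ alone to have flipped. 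Removing $w,w',t,t'$ by $\Omega_2$ finishes the single-edge step, and iterating handles an arbitrary set of reversed edges. The key idea you are missing is precisely this: create a local odd square with auxiliary vertices and let a double $\Omega_4$ on the auxiliaries push the oddness onto the target edge.
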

\begin{proof}
Let $u$ and $v$ be two adjacent vertices in $G$. We can change the
direction of the edge connecting $u$ and $v$ in the following way.
We add two vertices $w, w'$ such that $N(w)=N(w')=\{v\}$ ($\Omega_2$
move). Then we add two vertices $t, t'$ such that $N(t)=N(t')=\{u,
w\}$ and the square $uvwt$ is odd (another $\Omega_2$ move). Denote
the obtained graph $G'$. Then we apply twice $\Omega_4$ move to the
pair of vertices $w, t$. In the new graph $G''$ the adjacency of
vertices are the same as in $G'$ and the directions of edges remain
unchanged except the edge $uv$ which changes the direction because
the square $uvwt$ in $G''$ is even. Then we remove vertices $w, w',
t, t'$ to obtain the graph $G'''$ which differs from $G$ by the
direction of the edge $uv$.

Repeating this operation we get to the graph $\wt G$.
\end{proof}

This statement shows that the theory of oriented bipartite graphs
with moves $R, \Omega_1,\dots,\Omega_4$ is in fact the theory of
{\em undirected} labeled bipartite graphs with the usual
Reidemeister moves of graph-links which preserve the bipartite
structure of the graphs. So we have to impose some additional
constraints to make orientation of graphs significant.

\begin{defi}
Let $G$ be an oriented bipartite labeled graph. We call the
orientation of $G$ {\em principally unimodular} if for each state
$s\subset\V$ we have $\det A(s)$ is equal either $0$ or $1$. The
graph $G$ we call {\em PU-oriented}.
\end{defi}

Any bipartite graph, which is realizable as the intersection graph
of a chord diagram, is PU-oriented~\cite{B}. The inverse statement
is not true, this graph has a PU-orientation but is not realizable:
$$
\xymatrix{ &\cdot\ar[r]&\cdot\ar[r]&\cdot\ar[dr]&\\
\cdot\ar[ur]\ar[r]\ar[dr]&\cdot\ar[r]&\cdot\ar[r]&\cdot\ar[r]&\cdot\\
&\cdot\ar[r]&\cdot\ar[r]&\cdot\ar[ur]&}
$$
The question whether exists a bypartite PU-oriented graph that can
not be transformed by Reidemeister moves into a realizable graph is
still open.

\begin{lem}\label{crit1}
Let $G$ be an oriented bipartite labeled graph. These statements are
equivalent:
\begin{enumerate}
\item $G$ is PU-oriented;
\item any minor of the matrix $A(G)$ is equal to $0, -1$ or $1$;
\item any minor of the matrix $B(G)$ is equal to $0, -1$ or
$1$.
\end{enumerate}
\end{lem}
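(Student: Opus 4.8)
The plan is to route both equivalences through a single computation of the minors of $A(G)$ in terms of the minors of $B=B(G)$, using the bipartite block form. Fix a square submatrix of $A(G)$ with row set $R$ and column set $C$, and split them according to $\V=\V_0\sqcup\V_1$ as $R=R_0\sqcup R_1$, $C=C_0\sqcup C_1$. Reordering the rows and columns so that the $\V_0$-indices come first changes the determinant only by a sign, which is harmless since the set $\{0,-1,1\}$ is closed under negation; after this the submatrix takes the anti-block-diagonal shape $\bega{cc}0&P\\Q&0\ena$ with $P=B_{R_0,C_1}$ of size $|R_0|\times|C_1|$ and $Q=-(B_{C_0,R_1})^\top$ of size $|R_1|\times|C_0|$. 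In the Leibniz expansion a nonzero term needs a permutation sending $R_0$ into $C_1$ and $R_1$ into $C_0$, so the minor vanishes unless $|R_0|=|C_1|=:p$ and $|R_1|=|C_0|=:q$; in that case $P,Q$ are square and $\det\bega{cc}0&P\\Q&0\ena=(-1)^{pq}\det P\det Q$. Substituting $Q=-(B_{C_0,R_1})^\top$ shows that every minor of $A(G)$ equals $\pm\det(B_{R_0,C_1})\cdot\det(B_{C_0,R_1})$, a signed product of two minors of $B(G)$, or is $0$.

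The equivalence of $(2)$ and $(3)$ is then immediate. For $(3)\Rightarrow(2)$, every minor of $A(G)$ is (up to sign) a product of two minors of $B(G)$ or zero, hence lies in $\{0,-1,1\}$ as soon as all minors of $B(G)$ do. For $(2)\Rightarrow(3)$, take $R_1=C_0=\emptyset$, i.e. $q=0$: the submatrix of $A(G)$ is then exactly $B_{R_0,C_1}$, so every square minor of $B(G)$ already occurs as a minor of $A(G)$.

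The heart of the argument is $(1)\Leftrightarrow(3)$, obtained by specializing the formula to a \emph{principal} minor, $R=C=s$. Writing $s=s_0\sqcup s_1$ with $s_i=s\cap\V_i$, the vanishing condition becomes $|s_0|=|s_1|=:p$, and the two $B$-factors now coincide, both equal to $\det B_{s_0,s_1}$; moreover the sign is $(-1)^{p\cdot p+p}=(-1)^{p(p+1)}=1$. Hence $\det A(s)=(\det B_{s_0,s_1})^2$, a perfect square, exactly as the Pfaffian viewpoint predicts. Consequently $\det A(s)\in\{0,1\}$ for every state $s$ if and only if $\det B_{s_0,s_1}\in\{0,-1,1\}$ for all equal-sized $s_0\subset\V_0$, $s_1\subset\V_1$; since these range over all square submatrices of $B(G)$, this is precisely statement $(3)$. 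Combining with $(2)\Leftrightarrow(3)$ closes the cycle.

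I expect the only genuine bookkeeping to be the sign in the block determinant formula together with the verification $(-1)^{p(p+1)}=1$. This last point is conceptually the crux: it is what pins the principal minors to $\{0,1\}$ (a nonnegative square) rather than merely to $\{0,-1,1\}$, and thereby reconciles the apparent asymmetry between condition $(1)$ and conditions $(2)$–$(3)$. No deeper structural theory of principal unimodularity is required — the bipartite block form does all the work.
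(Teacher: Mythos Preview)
Your argument is correct and follows essentially the same route as the paper: the bipartite block form of $A(G)$ reduces every minor of $A(G)$ to (up to sign) a product of two minors of $B(G)$, and specializing to principal minors gives $\det A(s)=(\det B(s))^2$. The only cosmetic difference is organization --- the paper proves the cycle $1\Rightarrow 3\Rightarrow 2\Rightarrow 1$ whereas you prove $(2)\Leftrightarrow(3)$ and $(1)\Leftrightarrow(3)$ --- and you are more explicit than the paper about the sign $(-1)^{p(p+1)}=1$ that guarantees $\det A(s)\ge 0$.
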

\begin{proof}
1 $\Rightarrow$ 3. Let $B$ be a square submatrix of $B(G)$. Denote
$s_0\in\V_0$ the set of vertices that correspond the rows of $B$ and
$s_1\in\V_1$ the set of vertices that correspond the columns of $B$.
Then we have
$$ A(s_0\cup s_1)=\bega{cc} 0 & B\\ -B^\top &0\ena $$
so $\det A(s_0\cup s_1) =(\det B)^2=0$ or $1$. Then $\det B=0$, $1$
or $-1$.

3 $\Rightarrow$ 2. Let $C$ be a square matrix in $A(G)$. According
to the splitting $\V=\V_0\sqcup\V_1$ the matrix $C$ can be written
in the block form
$$
C= \bega{cc} 0 & B_1\\ -B_2^\top &0\ena.
$$
If the blocks $B_1$ and $B_2$ are not square then the rank of $C$ is
less then the size of $C$ so $\det C=0$. Otherwise, $\det C=\pm\det
B_1\det B_2=0, -1$ or $1$ because $\det B_1,\det B_2=0,-1$ of $1$.

The implication 2 $\Rightarrow$ 1 is obvious.
\end{proof}

\begin{propo}\label{faith_invar}
Let $G$ be PU-oriented. Then
\begin{enumerate}
\item any subgraph of $G$ is PU-oriented;
\item if $G'$ is obtained from $G$ by applying the moves
$R,\Omega_1,\Omega_3,\Omega_4$ then $G'$ is PU-oriented.
\end{enumerate}
\end{propo}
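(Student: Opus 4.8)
The plan is to push everything through the total–unimodularity criterion of Lemma~\ref{crit1}: by that lemma $G$ is PU-oriented exactly when every minor of $A(G)$ lies in $\{0,-1,1\}$, equivalently (each $A(s)$ being a principal submatrix, and $A$ being skew-symmetric) when every principal minor of $A(G)$ lies in $\{0,1\}$. I will then show that each operation in the statement either keeps $A$ in a class for which this property is manifestly stable, or realises a principal pivot transform, which is known to preserve principal unimodularity~\cite{BCG}. For part~1, an induced subgraph $H\subset G$ on a vertex set $W$ has adjacency matrix $A(H)=A(G)[W]$, a principal submatrix of $A(G)$; every minor of $A(H)$ is a minor of $A(G)$, hence lies in $\{0,-1,1\}$, and Lemma~\ref{crit1} returns that $H$ is PU-oriented.

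For part~2 I treat the moves one at a time. The move $R$ at a vertex $v$ negates simultaneously the $v$-row and the $v$-column, i.e.\ replaces $A$ by $DAD$ with $D=\mathrm{diag}(\dots,-1,\dots)$ a signature matrix, so each minor is merely multiplied by a $\pm1$ factor and the property of Lemma~\ref{crit1} is untouched. The move $\Omega_1$ augments $A$ by a zero row and a zero column (or deletes such): a minor of the enlarged matrix either omits the new index and is unchanged, or contains it and vanishes. The essential case is $\Omega_4$. From its description the pair $\{u,v\}$ is an edge, so $A[\{u,v\}]=\bega{cc}0&1\\-1&0\ena$ is nonsingular with determinant $1$, and the prescription—invert this block, readjust every pair $t\in N(u)$, $w\in N(v)$ so that the square $utwv$ becomes \emph{even}, and swap the labels as $a\mapsto -b$, $b\mapsto -a$—is precisely the principal pivot transform $A\mapsto A*\{u,v\}$ written out for a skew-symmetric matrix, the \emph{even square} rule being exactly what fixes the signs of the Schur-complement entries.

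To finish this case I use the pivoting identity for principal minors, $\det\bigl((A*\{u,v\})[S]\bigr)=\pm\det\bigl(A[S\triangle\{u,v\}]\bigr)/\det\bigl(A[\{u,v\}]\bigr)$: since the denominator is $1$, every $\det A[S\triangle\{u,v\}]\in\{0,1\}$, and a principal minor of a skew-symmetric matrix is nonnegative, the transformed principal minors again lie in $\{0,1\}$, so $G'$ is PU-oriented by Lemma~\ref{crit1}.

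It remains to handle $\Omega_3$, which I expect to be the genuine obstacle. The hypotheses are special: $u$ is adjacent only to $v$ and $w$ with $u\to v$ and $u\to w$, so the $u$-row of $A$ equals $e_v+e_w$ and the $u$-column equals $-e_v-e_w$; a direct computation then shows the move replaces this row by $\mathrm{row}_v(A)-\mathrm{row}_w(A)$ and this column by $\mathrm{col}_v(A)-\mathrm{col}_w(A)$, the relabelling of $v,w$ not affecting $A$. This is not a congruence—the combining matrix is singular—so no global determinant factor is available, and I cannot copy the decomposition trick of the first proposition of this section verbatim, since that trick introduces auxiliary vertices by $\Omega_2$, and $\Omega_2$ need not preserve principal unimodularity (it appends an essentially arbitrary signed row twice, which is why it is deliberately absent from the list). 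My primary plan is therefore to factor $\Omega_3$ into principal pivots $\Omega_4$ on \emph{existing} edges, together with $R$, so that no $\Omega_2$ is invoked and PU-orientedness is inherited from the cases already settled; the configuration in which $u$ is adjacent only to $v$ and $w$ is exactly the kind of local picture in which a short sequence of edge pivots reproduces the prescribed readjustment. Should a clean factorisation resist, the fallback is a direct minor check: for a state $s$ with $u\notin s$ nothing changes, while for $u\in s$ one expands $\det A'(s)$ in the modified $u$-row and $u$-column and matches the result against principal minors $\det A(s')\in\{0,1\}$ of the original matrix, the identity $\mathrm{row}_u(A)=e_v+e_w$ keeping the number of terms small. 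Pinning down the signs in this expansion, and in the pivot bookkeeping of $\Omega_4$, is where the main effort lies.
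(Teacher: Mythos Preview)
Your treatment of part~1 and of the moves $R$ and $\Omega_1$ coincides with the paper's. For $\Omega_4$ you take a genuinely different route: you identify the move as the principal pivot transform $A\mapsto A*\{u,v\}$ and invoke the Tucker-type identity $\det\bigl((A*\{u,v\})[S]\bigr)=\pm\det A[S\triangle\{u,v\}]$ from~\cite{BCG}, whereas the paper works directly with the bipartite block $B(G)$ and checks every minor of $B(G')$ by elementary row and column operations. Your argument is correct and more conceptual; the paper's is self-contained but longer.

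For $\Omega_3$ your primary plan---factoring the move into $\Omega_4$'s on existing edges plus $R$---is unlikely to succeed. Any pivot at an edge $uv$ or $uw$ toggles adjacencies among \emph{other} vertices (namely between $N(u)\setminus\{v\}=\{w\}$ and $N(v)\setminus\{u\}$), whereas $\Omega_3$ alters only the row and column of $u$; undoing the collateral changes to $w$'s neighbourhood would require further pivots that in turn disturb $v$, and there is no evident finite sequence that closes up without introducing auxiliary vertices. I would go straight to your fallback, which is exactly what the paper does: with the $u$-row of $A(G')$ equal to $\mathrm{row}_v(A)-\mathrm{row}_w(A)$, one exhibits for every state $s'\subset\V$ a state $s\subset\V$ with $\det A(G'(s'))=\det A(G(s))$ (and conversely), by a short case split on which of $u,v,w$ lie in $s'$. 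One point your sketch omits and the paper makes explicit: the vector $a-b$ has entries in $\{0,\pm1\}$ only because PU-orientedness of $G$ forbids the configuration $a_{vp}=-a_{wp}=\pm1$ (that configuration would give $\det A(\{u,v,w,t\})=4$). You will need this both to confirm that $G'$ is a bona fide oriented simple graph and to make the minor-matching go through.
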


\begin{proof}
The first statement of the proposition is evident.

Invariance under $R$ move. The matrix $A(G')$ is obtained from
$A(G)$ by multiplication by $-1$ the row and the column
corresponding to the vertex $v$ of the move $R$. Then for any state
$s\in\V$ we have $\det A(G'(s))=\det A(G(s))$ if $v\not\in s$ and
$\det A(G'(s))=(-1)^2\det A(G(s))=\det A(G(s))$ if $v\in s$. So all
the determinants are equal to $0,-1$ or $1$, hence $G'$ is
PU-oriented.

Invariance under $\Omega_1$ move. Assume we obtain $G'$ by adding an
isolated vertex $v$. Then for any state $s\in\V(G')$ we have $\det
A(G'(s))=0$ if $v\in s$ and $\det A(G'(s))=\det A(G(s))$ if
$v\not\in s$. Thus $G'$ is PU-oriented.

Invariance under $\Omega_3$ move. Without loss of generality we can
renumber $\V(G)=\V(G')$ so that the vertices $u, v, w$ of the move
have the indices $1,2,3$. Then the adjacency matrices have the form
$$
A(G)=\bega{cccc} 0&1&1&0\\
-1&0&0&a\\
-1&0&0&b\\
0^\top&-a^\top&-b^\top&C\ena,\
A(\wt G)=\bega{cccc} 0&0&0&a-b\\
0&0&0&a\\
0&0&0&b\\
(b-a)^\top&-a^\top&-b^\top&C\ena.
$$

We only need to check that the vector $a-b$ is the adjacency vector
of the vertex $u$ in $G'$. Let us consider a vertex $t$ in $G$ and
let $p$ be its index. Denote $A(G)=(a_{ij})_{i,j=1,\dots, n}$
$A(G')=(a'_{ij})_{i,j=1,\dots, n}$. If $t$ is not adjacent to $v$
and $w$ then $a_{2p}=a_{3p}=0$ and $a'_{1p}=0$ so
$a'_{1p}=a_{2p}-a_{3p}$. If $t$ is adjacent to $v$ and not adjacent
to $w$ then $a_{3p}=0$ and $a'_{1p}=a_{2p}=a_{2p}-a_{3p}$. If $t$ is
adjacent to $w$ and not adjacent to $v$ then $a_{2p}=0$ and
$a'_{1p}=-a_{3p}=a_{2p}-a_{3p}$. If $t$ is adjacent to $v$ and $w$
then $a_{2p}=a_{3p}$ and $a'_{1p}=0=a_{2p}-a_{3p}$. The case
$a_{2p}=1,\ a_{3p}=-1$ (or $a_{2p}=-1,\ a_{3p}=1$) is impossible
because we would have
$$
A(G(\{u,v,w,t\}))=\bega{cccc} 0& 1&1&0\\
-1& 0&0&1\\
-1& 0&0&-1\\
0& -1&1&0\ena
$$
with $\det A(G(\{u,v,w,t\}))=4$ so $G$ would not be PU-orientable.

For any state $s\in\V\setminus \{u,v,w\}$ we have $\det A(G(s))=\det
A(G'(s))$, $\det A(G(s\cup\{v\}))=\det A(G'(s\cup\{v\}))$, $\det
A(G(s\cup\{w\}))=\det A(G'(s\cup\{w\}))$, $\det
A(G(s\cup\{v,w\}))=\det A(G'(s\cup\{v,w\}))$. There are also
equalities $\det A(G(s\cup\{u\}))=0$, $\det A(G(s\cup\{u,v\}))=\det
A(G(s\cup\{u,w\}))=\det A(G(s))$, $\det A(G(s\cup\{v,w\}))=\det
A(G'(s\cup\{u,v\}))=A(G'(s\cup\{u,w\}))$, $\det
A(G(s\cup\{u,v,w\}))=\det A(G'(s\cup\{u\}))$ and $\det
A(G'(s\cup\{u,v,w\}))=0$.

We see that all the determinants $\det A(G(s)),\ s\in\V$ are equal
to some determinants of the graph $G'$ and vice versa. Thus, $G$ is
PU-oriented iff $G'$ is PU-oriented.

Invariance under $\Omega_4$ move. Without loss of generality we can
suppose that $u\in\V_0$, $v\in\V_1$ and $u\rightarrow v$ where $u,v$
are the vertices of the move $\Omega_4$. We can also assume that $u$
is the first vertex in $\V_0$ and $v$ is the first in $\V_1$. Then
the matrix $B(G)$ looks like
$$
B(G)=\bega{cc} 1 & c\\
d & B_1\ena.
$$
Adding/subtracting the first row of $B(G)$ from others we obtain the
matrix
$$
\wt B = \bega{cc} 1 & c\\
0 & \wt B_1\ena.$$ The elements of $\wt B_1$ are $0,\pm 1$ because
otherwise we would have (up to sign change of rows and columns) a minor $\left|\begin{array}{cc} 1 & 1\\
-1& 1\end{array}\right|=2$ in $B(G)$. Let $D_1$ be any square matrix
in $\wt B_1$ and $D$ be the matrix in $\wt B$ obtained from $D_1$ by
adding the first row and the first column. Then $\det D_1=\det
D=0,\pm 1$ because $D$ is equivalent by row transformation to a
square submatrix in $B(G)$. In particular, $\det \wt B_1=\det B$.
The matrix $\wt B_1$ can be also obtained by adding/subtraction the
column $d$ to the columns of $B_1$.

The matrix $B(G')$ is equal to $
\bega{cc} -1 & c\\
d & \wt B_1\ena$. Let us consider any square submatrix $D$ in
$B(G')$. If $D$ contains neither the first row nor the first column
then $D$ is a submatrix of $\wt B_1$ so $\det D= 0,\pm1$. If $D$
contains the first row but not the first column it is equivalent (by
row transformations) to a submatrix in $B(G)$. If $D$ contains the
first column but not the first row it is equivalent (by column
transformations) to a submatrix in $B(G)$. If $D=\bega{cc} -1 & c'\\
d' & C\ena$ then it can be transformed (by operations on rows) to
the matrix $\bega{cc} -1 & c'\\
0 & \wt C\ena$ where $\wt C$ coincides with a submatrix in $B_1$.
Hence $\det D=-\det\wt C=0,\pm 1$.

Thus, any minor of the matrix $B(G')$ is equal to $0,\pm1$ and by
Lemma~\ref{crit1} the graph $G'$ is PU-oriented.
\end{proof}

The set of PU-oriented graphs is not stable under the second
Reidemeister move. So we define the {\em principally unimodular
second Reidemeister move} $\ofo$ by requiring the result to be a
PU-oriented bipartite graph.

\begin{defi}
A {\em PU-oriented graph-link} is the class of equivalence of a
PU-oriented bipartite labeled graph modulo moves $R,\Omega_1,
\ofo,\Omega_3, \Omega_4$.
\end{defi}

Let us consider several properties of PU-oriented graph.


\begin{propo}\label{stable_even}
Let $G$ be an oriented bipartite labeled graph. Then $G$ is
PU-oriented if and only if any graph $G'$ obtained from $G$ by a
sequence of moves $\Omega_4$ does not contain odd squares
(4-cycles).
\end{propo}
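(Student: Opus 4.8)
The plan is to convert the condition ``contains an odd square'' into a statement about $2\times2$ minors of $B$ and then feed it into Lemma~\ref{crit1}. First I would record the dictionary between squares and minors. A $4$-cycle of $G$ has its two pairs of opposite vertices in $\V_0$ and $\V_1$, say $u,w\in\V_0$ and $v,t\in\V_1$, and corresponds to the $2\times2$ submatrix of $B(G)$ on rows $u,w$ and columns $v,t$, namely $\bega{cc} B_{uv} & B_{ut}\\ B_{wv} & B_{wt}\ena$, all of whose entries lie in $\{-1,1\}$. A direct check of the orientation convention shows that the number of codirectional edges of the cycle is even precisely when this submatrix has determinant $B_{uv}B_{wt}-B_{ut}B_{wv}=0$, and odd precisely when the determinant is $\pm2$. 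Conversely, any $2\times2$ submatrix of $B(G)$ with determinant $\pm2$ must have all entries equal to $\pm1$ and hence comes from a (necessarily odd) $4$-cycle. Thus $G$ contains an odd square if and only if $B(G)$ has a $2\times2$ minor equal to $\pm2$.

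The forward implication is then immediate. If $G$ is PU-oriented, Proposition~\ref{faith_invar} shows that every graph obtained from $G$ by $\Omega_4$ moves is again PU-oriented, and Lemma~\ref{crit1} says all of its $B$-minors are $0,\pm1$; in particular no $2\times2$ minor equals $\pm2$, so by the dictionary no such graph has an odd square.

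For the converse I would argue the contrapositive: assuming $G$ is not PU-oriented, I must exhibit an $\Omega_4$-reachable graph with an odd square. By Lemma~\ref{crit1}, $B(G)$ has a square submatrix $D$ with $|\det D|\ge2$; let $k$ be the least size of such a \emph{bad} submatrix and induct on $k$. If $G$ already contains an odd square we are done; otherwise $G$ has no odd square, so by the dictionary all $2\times2$ minors are $0,\pm1$ and hence $k\ge3$. In this case every $4$-cycle through any edge is even, and I claim the $\Omega_4$ move on any edge is then well-defined (its entries stay in $\{0,\pm1\}$) and realizes the Schur complement: writing $B(G)=\bega{cc} p & c\\ d & B_1\ena$ with pivot $p=B_{uv}=\pm1$, the new block is $\wt B_1=B_1-p\,d\,c$, and an entry $(\wt B_1)_{wt}$ could equal $\pm2$ only if $u,v,w,t$ were an odd square, which is excluded. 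Choosing the pivot $(u,v)$ to be a nonzero entry inside $D$ and applying the pivoting determinant identity $\det(\wt B_1)_{I',J'}=\pm\det D$ (the same identity used in the proof of Proposition~\ref{faith_invar}, with $I',J'$ the remaining rows and columns of $D$) yields a submatrix of $B(G')$ of size $k-1$ with absolute determinant $\ge2$. Hence $G'$ has a bad minor of strictly smaller size, and the induction hypothesis applied to $G'$ produces, after further $\Omega_4$ moves, a graph with an odd square; since this graph is also $\Omega_4$-reachable from $G$, we are done.

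The main obstacle I expect is the step linking the purely combinatorial $\Omega_4$ move to the Schur-complement arithmetic: one must verify that, in the absence of odd squares through the pivot edge, toggling the adjacencies between $N(u)$ and $N(v)$ under the even-square orientation rule reproduces exactly the integer matrix $B_1-p\,d\,c$ and keeps every entry in $\{0,\pm1\}$, so that the move is a legitimate graph move and the pivoting identity applies verbatim. The sign bookkeeping in the dictionary (even $\leftrightarrow$ determinant $0$, odd $\leftrightarrow$ determinant $\pm2$) and the correct placement of the $-1$'s in $B(G')$ are the routine but error-prone parts; once these are in place, the determinant identity and the induction finish the argument.
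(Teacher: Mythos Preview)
Your proposal is correct and follows essentially the same approach as the paper: the forward direction via Proposition~\ref{faith_invar}, and the converse via the $2\times2$ minor/odd-square dictionary together with induction on the size of a minor using the $\Omega_4$-pivoting (Schur complement) identity from the proof of Proposition~\ref{faith_invar}. The only cosmetic difference is that the paper argues the converse directly (showing every $k\times k$ minor of a stably even graph is $0,\pm1$ by pivoting down to size $k-1$), whereas you run the contrapositive (pivoting a minimal bad minor down by one); the underlying mechanism is identical.
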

\begin{proof}
Necessity of the condition follows from
Proposition~\ref{faith_invar}.

Assume that applying $\Omega_4$ to $G$ doesn't generate odd squares.
We shall call such graphs stably even. Let us consider a state
$s\in\V$ and $s_i=s\cap V_i,\ i=0,1$. If $\#s_0\ne \#s_1$ then $\det
A(s)=0$. So assume $\#s_0\ne \#s_1=k$. We shall prove that $\det
A(s)=(\det B(s))^2=0$ or $1$ by induction on $k$.

For $k=1$ the statement is obvious. If $k=2$ then $\det B(s)=0,\pm1$
because $G$ has no odd squares.

Assume that for any stably even graph $G'$ and $s'_i\in V_i(G'),\
i=1,2$ such that $\#s'_0=\#s'_1<k$ we have $\det B(G'(s'_0\cup
s'_1))=0,\pm 1$.

If the matrix $B(s)$ contains only zeros then $\det B(s)=0$.
Otherwise without loss of generality we can suppose that
$B(s)=\bega{cc} 1 & c\\
d & B_1\ena.$ Then apply the move $\Omega_4$ to the first vertex $u$
in $s_0$ and the first vertex $v$ in $s_1$. Denote the obtained
graph as $G'$. Denote also $s'=s\setminus\{u,v\}$. Then we have $ B(G'(s))=\bega{cc} -1 & c\\
d & \wt B_1\ena$ where $\det \wt B_1=\det B(s)$ (see the proof of
$\Omega_4$-invariance in Proposition~\ref{faith_invar}). But $\wt
B_1= B(G'(s'))$, $G'$ is stably even and
$\#s'_0=\#s'_1<\#s_0=\#s_1=k$. Therefore, $\det B(s)=\det \wt
B_1=0,\pm1$.

Thus, for any state $s$ of $G$ we have $\det B(s)=0,\pm1$ so $G$ is
PU-oriented by Lemma~\ref{crit1}.
\end{proof}

\begin{defi}
We call a cycle $C$ in an oriented graph {\em chordless} if
$C=G(\V(C))$, that is if any two vertices in $C$, which are not
neighbours in $C$, are not adjacent in $G$. A cycle $C$ is {\em
even} if the number of codirectional edges of the cycle is even.
Otherwise the cycle is {\em odd}.
\end{defi}

\begin{propo}\label{primitive_even}
Let $G$ be a PU-oriented bipartite graph. Then any chordless cycle
in $G$ is even.
\end{propo}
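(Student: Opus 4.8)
The plan is to read off evenness from the determinant of the state matrix of the cycle. Let $C$ be a chordless cycle. Since $G$ is bipartite, $C$ has even length $2k$ and its vertices alternate between $\V_0$ and $\V_1$, so $s=\V(C)$ meets each part in exactly $k$ vertices. Because $C$ is chordless we have $G(s)=C$, hence $A(s)$ is precisely the adjacency matrix of the cycle and the $k\times k$ block $B(s)$ records exactly the $2k$ edges of $C$. Labelling the vertices of $\V_0\cap C$ as $u_1,\dots,u_k$ and those of $\V_1\cap C$ as $w_1,\dots,w_k$ in cyclic order, each $u_i$ is adjacent only to $w_{i-1}$ and $w_i$ (indices mod $k$), so $B(s)$ is a cyclic bidiagonal matrix whose only nonzero entries lie on its diagonal and on its wrapped subdiagonal.

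For such a matrix only two permutations contribute to the determinant, the identity and the full $k$-cycle, giving $\det B(s)=P+(-1)^{k-1}Q$, where $P$ is the product of the diagonal entries and $Q$ the product of the subdiagonal entries. Each such entry is $\pm 1$, so $P,Q\in\{-1,+1\}$ and therefore $\det B(s)\in\{-2,0,2\}$. On the other hand $\det A(s)=(\det B(s))^2$, and since $G$ is PU-oriented this value is $0$ or $1$; the only value compatible with $\{-2,0,2\}$ is $0$. Thus $\det B(s)=0$, which forces $P=(-1)^kQ$, that is $PQ=(-1)^k$.

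It remains to recognise $PQ$, the product of all $2k$ entries of $B(s)$ attached to the edges of $C$, as the parity of the cycle. Traverse $C$ as $v_0,v_1,\dots,v_{2k-1}$ and set $\delta_j=+1$ if the $j$-th edge is codirectional with the traversal and $\delta_j=-1$ otherwise; then the number of codirectional edges is even exactly when $\prod_j\delta_j=+1$ (using that $2k$ is even). The point is that the $B$-entry of an edge equals $\delta_j$ when the edge issues from a $\V_0$-vertex and equals $-\delta_j$ when it issues from a $\V_1$-vertex, because of the antisymmetry $a_{ij}=-a_{ji}$; since exactly $k$ of the $2k$ edges issue from $\V_1$-vertices, one obtains $PQ=(-1)^k\prod_j\delta_j$. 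Combining this with $PQ=(-1)^k$ yields $\prod_j\delta_j=+1$, so $C$ is even.

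The main obstacle is this last translation. The determinant is intrinsically insensitive to the chosen direction of traversal and to the enumeration of the vertices, whereas the count of codirectional edges is a purely combinatorial quantity, so the sign bookkeeping relating the entries of $B(s)$ to the $\delta_j$ through $a_{ij}=-a_{ji}$ must be carried out carefully and verified to be independent of these choices. Everything else reduces to a short determinant evaluation together with the defining inequality of PU-orientation.
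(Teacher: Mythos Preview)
Your proof is correct and takes a genuinely different route from the paper's. The paper argues by induction on the length of the chordless cycle: given a cycle $C$ of length $l>4$, it applies the move $\Omega_4$ at a pair of neighbouring vertices of $C$; the resulting graph is again PU-oriented by Proposition~\ref{faith_invar}, and in it $C$ has split into an even square and a chordless cycle of length $l-2$ of the same parity. Iterating brings one down to a square, which is even by Proposition~\ref{stable_even}. Your argument bypasses this reduction entirely and reads evenness directly from $\det B(s)$ for $s=\V(C)$: because $C$ is chordless, $B(s)$ is cyclic bidiagonal with $\pm 1$ entries, hence $\det B(s)\in\{-2,0,2\}$, while PU-orientation (via Lemma~\ref{crit1}) forces $\det B(s)\in\{-1,0,1\}$; the resulting vanishing is then unpacked into the parity condition via the sign bookkeeping you describe, which is indeed correct (exactly $k$ of the $2k$ entries pick up the sign flip from $a_{ij}=-a_{ji}$). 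Your approach is more self-contained: it uses only the defining determinant condition and does not rely on the $\Omega_4$-invariance of PU-orientation or on Proposition~\ref{stable_even}. The paper's approach, by contrast, exhibits $\Omega_4$ as a reduction tool and ties the proposition into the surrounding machinery.
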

\begin{proof}
Let us consider a chordless cycle $C$. If the length $l$ of $C$ is
$4$ then $C$ is a square and, thus, is even. If $l>4$ we choose any
two neighbour vertices $u$ and $v$ of $C$ and apply the move
$\Omega_4$. In the new graph the cycle $C$ splints into an even
square that contains $u$ and $v$ and a chordless cycle $C'$ of
length $l-2$. The new cycle has the same parity as $C$. After
repeating this procedure $l/2-2$ times, the rest of the cycle $C$
will be a square which is even due to Proposition~\ref{stable_even}.
Therefore the original cycle $C$ is even.
\end{proof}

\begin{coro}
Any two PU-orientations of a bipartite graph $G$ coincide up to
reversions $R$.
\end{coro}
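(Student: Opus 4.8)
The plan is to recast the statement in terms of $\Z_2$-cohomology of the underlying graph. Fix the underlying bipartite graph and let $O_1,O_2$ be two PU-orientations of it. I record their difference by the $1$-cochain $f\in C^1(G;\Z_2)$ with $f(e)=1$ exactly when $O_1$ and $O_2$ orient the edge $e$ oppositely. A reversion $R$ applied to $O_1$ at a vertex $w$ reverses precisely the edges incident to $w$, so it changes $f$ by adding the \emph{star} $\delta w$ (the indicator cochain of the edges meeting $w$). Hence $O_1$ and $O_2$ coincide up to reversions if and only if $f$ is a $\Z_2$-linear combination of stars, i.e. $f$ lies in the cut (bond) space of $G$. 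Connected components may be handled separately, and the total reversion of a component acts trivially (each internal edge is flipped twice), which is exactly why the result is only asserted \emph{up to} reversions.

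Next I would invoke the standard duality for graphs over $\Z_2$: the cut space is exactly the orthogonal complement of the cycle space $Z_1(G;\Z_2)$ under the edge-indexed pairing $\langle\,\cdot\,,\,\cdot\,\rangle$. Thus it suffices to prove $\langle f,C\rangle=0$ for every cycle $C$. For a fixed $C$ with a chosen traversal, comparing the two orientations edge by edge gives $\langle f,C\rangle=\sum_{e\in C}f(e)$, and an edge contributes $1$ precisely when reversing it flips its codirectional status along $C$; therefore $\langle f,C\rangle$ equals, modulo $2$, the difference between the numbers of codirectional edges of $C$ in $O_1$ and in $O_2$. In other words, $\langle f,C\rangle=0$ says exactly that $C$ has the same parity under both orientations. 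I emphasize that $\langle f,\cdot\rangle$ is a genuine $\Z_2$-linear functional on $Z_1(G;\Z_2)$, even though the raw parity of a single orientation need not be additive over the cycle space; this linearity is what I will exploit.

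Now I would apply Proposition~\ref{primitive_even}: since $O_1$ and $O_2$ are PU-oriented, every chordless cycle is even in each of them, so $\langle f,C\rangle=0$ for every chordless $C$. It remains to upgrade this to all of $Z_1(G;\Z_2)$, and for that I claim the chordless cycles span the cycle space. The cycle space is spanned by simple cycles, and any simple cycle carrying a chord $v_iv_j$ is the symmetric difference $C_1\oplus C_2$ of the two strictly shorter cycles obtained by closing each arc of $C$ through the chord, the chord cancelling in the sum; induction on the length then writes every simple cycle as a $\Z_2$-sum of chordless cycles. By linearity of $\langle f,\cdot\rangle$ and its vanishing on chordless cycles, we obtain $\langle f,C\rangle=0$ for all $C\in Z_1(G;\Z_2)$.

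Consequently $f$ is orthogonal to the cycle space, hence lies in the cut space and can be written $f=\sum_{w\in S}\delta w$ for some vertex set $S$. Performing the reversion $R$ at each vertex of $S$ turns $O_1$ into $O_2$, which is the assertion. The step that needs genuine care—the main obstacle—is the spanning claim together with the clean identification of $\langle f,C\rangle$ with the difference of the codirectional-edge counts of $C$ under $O_1$ and $O_2$; once these are in place, the cycle-space/cut-space duality finishes the argument at once.
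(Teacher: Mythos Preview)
Your proof is correct and follows essentially the same route as the paper: encode the difference of the two PU-orientations as a $\Z_2$-cochain, use Proposition~\ref{primitive_even} to see it vanishes on chordless cycles, invoke the fact that chordless cycles generate $H_1(G,\Z_2)$, and conclude the cochain is a coboundary, i.e.\ a sum of vertex-stars realizable by $R$-moves. You have simply spelled out details the paper leaves implicit---the explicit identification of $\langle f,C\rangle$ with the difference of codirectional-edge parities, and the induction showing chordless cycles span the cycle space.
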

\begin{proof}
The difference between two principally unimodular orientations
defines a cocycle $c$ in $H^1(G,\Z_2)$. By
proposition~\ref{primitive_even} the cocycle $c$ vanishes on any
chordless cycle. But chordless cycles generate $H_1(G,\Z_2)$,
therefore $c=0$. Then $c=d\alpha,\ \alpha\in C^0(G,\Z_2)$. This
means that one of the considered PU-orientations can be obtained
from the other by $R$ moves at the vertices $v$ such that
$\alpha(v)\ne 0$.
\end{proof}

Thus the theory of PU-oriented graph-links is in fact a theory of
{\em PU-orientable} graph-links. We shall call graphs, which admit
PU-orientation, {\em principally unimodular graphs} (or {\em
PU-graphs}) and call the corresponding graph-links {\em
PU-graphs-links}.

\section{Odd Khovanov homology of PU-graph-links}

Let $G$ be a PU-oriented bipartite labeled graph with $n$ vertices
and $A=A(G)$ be its adjacency matrix.

Suppose $s\subset \V=\V(G)$.  Consider the vector space $$ V(s) =
\Z<x_1, \dots, x_n\,|\, r^s_1,\dots, r^s_n>$$ where relations
$r^s_1,\dots, r^s_n$ are given by the formula
\begin{equation}
r^s_i=\left\{\begin{array}{cl} x_i-\sum\limits_{\{j\,|\,v_j\in s\}}
\sgn(v_j)a_{ij}x_j,& \mbox{if }v_i\not\in s,\\
-\sum\limits_{\{j\,|\,v_j\in s\}} \sgn(v_j)a_{ij}x_j,& \mbox{if
}v_i\in s
 \end{array}\right.
\end{equation}

We denote $\ol A = (\ol a_{ij})_{i,j=1,\dots,n},\ \ol
a_{ij}=-\sgn(v_j)a_{ij},$ the relation matrix. Let $A'$ be a
submatrix in $A(G)$. We denote $\ol A'$ the corresponding submatrix
(i.e. submatrix with the same sets of rows and columns as $A'$) in
$\ol A$. We have $\rank A'=\rank \ol A'$ and $\rank A'=\rank \ol
A'$.

The main technical consequence of the principal unimodularity is the
following statement.

\begin{propo}
For any state $s$ the $\Z$-module $V(s)$ is free.
\end{propo}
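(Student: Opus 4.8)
The plan is to realize $V(s)$ as the cokernel of an explicit integer presentation matrix built from $\ol A$, reduce that cokernel to the one of the submatrix $A(s)$, and then deduce freeness from the total unimodularity granted by Lemma~\ref{crit1}. First I would record that $V(s)=\Z^n/\langle r^s_1,\dots,r^s_n\rangle$ is the cokernel of the $n\times n$ presentation matrix $M^s$ whose $i$-th row lists the coefficients of $r^s_i$ in the generators $x_1,\dots,x_n$. Ordering the vertices so that those of $s$ come first, the defining formula shows that a row indexed by $v_i\in s$ has entries $\ol a_{ij}$ in the columns $v_j\in s$ and zeros elsewhere, whereas a row indexed by $v_i\notin s$ carries a $1$ on its diagonal (the term $x_i$) together with entries $\ol a_{ij}$ in the columns $v_j\in s$. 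Hence $M^s$ is block lower-triangular, $M^s=\begin{pmatrix}\ol A_{ss}&0\\ \ol A_{\bar s s}&I\end{pmatrix}$, where $\ol A_{ss}$ is the submatrix of $\ol A$ on the rows and columns of $s$ and the identity block corresponds to the vertices outside $s$.

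Next I would use the identity block to eliminate generators. The relations $r^s_i=0$ with $v_i\notin s$ express each generator $x_i$, $v_i\notin s$, as a $\Z$-linear combination of $\{x_j:v_j\in s\}$, so a Tietze reduction collapses the presentation to generators $\{x_j:v_j\in s\}$ with relations coming from the rows $v_i\in s$; that is, $V(s)\cong\mathrm{coker}\,\ol A_{ss}$. Since $\ol a_{ij}=-\sgn(v_j)a_{ij}$, the matrix $\ol A_{ss}$ is obtained from $A(s)$ by right multiplication with the diagonal sign matrix $\mathrm{diag}(-\sgn(v_j))_{v_j\in s}$, which is invertible over $\Z$. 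Therefore $\mathrm{coker}\,\ol A_{ss}\cong\mathrm{coker}\,A(s)$, and the minors of $\ol A_{ss}$ and of $A(s)$ agree up to sign.

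It then remains to show $\mathrm{coker}\,A(s)$ is free. A finitely generated $\Z$-module presented by a square integer matrix is free precisely when all its nonzero invariant factors are $\pm1$, equivalently when the top determinantal divisor $D_r$ (the gcd of all $r\times r$ minors, with $r=\rank A(s)$) equals $1$. By Lemma~\ref{crit1} the PU-orientation forces every minor of $A(G)$, hence every minor of the submatrix $A(s)$, to lie in $\{0,\pm1\}$. As $A(s)$ has rank $r$ it admits a nonzero $r\times r$ minor, which is then $\pm1$, so $D_r=1$ and every invariant factor equals $1$; thus $\mathrm{coker}\,A(s)$, and with it $V(s)$, is free.

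I expect the only delicate part to be the bookkeeping in the first two steps: verifying the precise block shape of $M^s$ and confirming that the elimination using the identity block really does identify $V(s)$ with $\mathrm{coker}\,A(s)$ without introducing extra relations. Once that reduction is in place the final step is essentially formal, the principal unimodularity of $A(G)$ supplied by Lemma~\ref{crit1} being exactly the hypothesis that rules out torsion.
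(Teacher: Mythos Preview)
Your proposal is correct and follows essentially the same route as the paper: both arguments reduce freeness of $V(s)$ to the fact that every minor of the relation matrix lies in $\{0,\pm1\}$, which is exactly what Lemma~\ref{crit1} provides, and then invoke the standard criterion (Fitting ideals in the paper, determinantal divisors in your write-up) to rule out torsion. Your version is simply more explicit about the block shape of the full $n\times n$ presentation matrix and the Tietze reduction down to $\ol A(s)$, a step the paper leaves implicit by referring directly to ``the relation matrix $\ol A(s)$''.
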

\begin{proof}
The module $V(s)$ has no torsion if and only if for any $k$ the
ideal $E_k(s)\subset \Z$ generated by all minors of corank $k$ in
the relation matrix $\ol A(s)$ is equal to $0$ or $\Z$. But by
Lemma~\ref{crit1} all minors in $A$ are equal $0$ or $\pm1$, hence
every minor in $\ol A$ is equal $0$ or $\pm1$.
\end{proof}

The rank of $V(s)$ is equal to $\cor\ol A(s)=\cor A(s)$.

There is a natural bijection between states $s\subset\V$ and
vertices of the hypercube $\{0,1\}^n$. Every edge of the hypercube
is of the type $s\rightarrow s\oplus i$ where $s\oplus i$ denotes
$s\cup\{v_i\}$ if $v_i\not\in s$ and $s\setminus\{v_i\}$ if $v_i\in
s$. We orient the arrow so that $v_i\not\in s$ if $\sgn(v_i)=-1$ and
$v_i\in s$ if $\sgn(v_i)=1$.

We assign to every edge $s\rightarrow s\oplus i$ the map
$\partial_{s\oplus i}^s : \bigwedge^*V(s)\to\bigwedge^*V(s\oplus i)$
of exterior algebras defined by the formula
\begin{equation}
\partial_{s\oplus i}^s(u)=\left\{\begin{array}{cl} x_i\wedge u& \mbox{if }x_i=0\in V(s),\\
u& \mbox{if }x_i\ne 0\in V(s)
 \end{array}\right.
\end{equation}

\begin{lem}
$x_i=0\in V(s)$ iff $\cor A(s\oplus i) =\cor A(s)+1$
\end{lem}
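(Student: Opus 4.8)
The plan is to reduce the integral statement to rational linear algebra using the freeness of $V(s)$ just established, and then to control how the corank of the skew-symmetric matrix $A$ changes when it is bordered by a single index.

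First I would record the relation matrix explicitly. Ordering the vertices so that those outside $s$ come first, the relations $r^s_i$ assemble into the block matrix
$$ M^s=\bega{cc} I & \ol A(s^c,s)\\ 0 & \ol A(s)\ena, $$
where $s^c=\V\setminus s$ and $\ol A(s^c,s)$ is the submatrix of $\ol A$ on rows $s^c$ and columns $s$. Thus $V(s)$ is $\Z^n$ modulo the row lattice of $M^s$, and $x_i=0$ means $e_i$ lies in this lattice. Since $V(s)$ is free by the preceding proposition, the row lattice is saturated, so $e_i$ lies in it over $\Z$ iff it does over $\Q$; hence $x_i=0$ iff $e_i\perp\ker M^s$. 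A direct computation gives $\ker M^s=\{(-\ol A(s^c,s)w,\,w):w\in\ker\ol A(s)\}$, and because $\ol A(s)=-A(s)S(s)$ with $S(s)=\mathrm{diag}(\sgn v_j)_{j\in s}$ invertible, one has $\ker\ol A(s)=S(s)\ker A(s)$. Using $\ol a_{ij}\sgn(v_j)=-a_{ij}$, the orthogonality becomes a condition on $\ker A(s)$: for $v_i\notin s$ it reads $(a_{ij})_{j\in s}\in\text{row space }A(s)$, and for $v_i\in s$ it reads $u_i=0$ for every $u\in\ker A(s)$.

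The core is a bordering lemma for the skew-symmetric $A$. Writing $A(s\cup\{i\})=\bega{cc} A(s) & \gamma\\ -\gamma^\top & 0\ena$ with $\gamma=(a_{ji})_{j\in s}^\top$, I would note the bordered matrix is again skew-symmetric and show $\rank A(s\cup\{i\})=\rank A(s)$ when $\gamma$ lies in the column space of $A(s)$, and $=\rank A(s)+2$ otherwise; the evenness of the rank of a skew-symmetric matrix forces the jump to be exactly $2$, so $\cor$ moves by exactly $\pm1$. The bridge to the kernel is the identity $w^\top A(s)w=0$: if $\gamma=A(s)w$ then $(w,-1)\in\ker A(s\cup\{i\})$ has nonzero last coordinate, while conversely any kernel vector nonzero in the $i$-slot exhibits $\gamma$ as a combination of the columns of $A(s)$. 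Hence $\gamma\in\text{col space }A(s)$ iff $\ker A(s\cup\{i\})$ contains a vector nonzero at $i$, and exactly in that case $\cor$ increases by one.

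Combining these yields the two matching cases. If $v_i\notin s$ then $s\oplus i=s\cup\{i\}$, and $\cor A(s\oplus i)=\cor A(s)+1$ exactly when the rank is unchanged, i.e. $\gamma\in\text{col space }A(s)$; by skew-symmetry (and $(a_{ij})_{j\in s}=-\gamma^\top$) this is the same as $(a_{ij})_{j\in s}\in\text{row space }A(s)$, the condition for $x_i=0$. If $v_i\in s$ then $s\oplus i=s\setminus\{i\}$ and $A(s)$ borders $A(s\setminus\{i\})$ at $i$; here $\cor A(s\oplus i)=\cor A(s)+1$ means the rank drops by two, i.e. the border column is not in the column space, which by the kernel identity is exactly the vanishing of the $i$-coordinate on all of $\ker A(s)$ — again the condition for $x_i=0$. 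I expect the main obstacle to be the bookkeeping in the second case: phrasing the bordering lemma symmetrically enough that removing the index $i$ is handled by the same computation as adding it, and tracking the sign substitution $\ol A(s)=-A(s)S(s)$ carefully so that $\ker M^s$ is faithfully translated into $\ker A(s)$.
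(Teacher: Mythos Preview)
Your proof is correct and follows essentially the same strategy as the paper: reduce $x_i=0$ to a linear-dependence condition on the relation rows, then use that $A(s)$ is skew-symmetric so its rank is even and can only change by $0$ or $2$ under bordering. Your version is somewhat more systematic --- you invoke freeness of $V(s)$ explicitly to justify passing to $\Q$ via saturation, and you package the rank dichotomy as a clean bordering lemma with the kernel characterization $(w,-1)\in\ker A(s\cup\{i\})$ --- whereas the paper splits directly into the two cases $\sgn(v_i)=\pm 1$ (equivalently $v_i\notin s$ versus $v_i\in s$) and argues each by hand with the bordered block form of $\ol A(s\oplus i)$; but the underlying linear algebra is the same.
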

\begin{proof}
Case 1. $\sgn(v_i)=-1$. Then $v_i\not\in s$ and $s\oplus i =
s\cup\{v_i\}$. The relation matrix of $s\oplus i$ up to numeration
of vertices looks like
$$ \ol A(s\oplus i) = \left(\begin{array}{cc}
\ol A(s)& -\ol {a^\top}\\
\ol a & 0
\end{array}\right).$$
and we have $x_i=\sum\limits_{\{j\,|\,v_j\in s\}} \ol a_{ij}x_j$.
Equality $x_i=0$ means that row $\ol a$ is linearly dependent on
rows of the matrix $\ol A(s)$. This is equivalent to the equality
$\rank(\ol A(s)) = \rank \left(\begin{array}{c}
\ol A(s)\\
\ol a
\end{array}
\right)$. So $\rank \ol A(s)\le \rank\ol A(s\oplus i)\le \rank\ol
A(s)+1$. But the ranks of $\ol A(s)$ and $\ol A(s\oplus i)$ are even
because $\rank A(s) = \rank \ol A(s)$, $\rank A(s\oplus i) = \rank
\ol A(s\oplus i)$ and the matrices  $A(s)$ and $A(s\oplus i)$ are
skew-symmetric. Then $\rank A(s\oplus i) = \rank A(s)$ and $\rank
A(s\oplus i)  = \rank A(s)+1$

Case 2. $\sgn(v_i)=1$. Then $v_i\in s$ and $s\oplus i =
s\setminus\{v_i\}$. The intersection matrix of $s$ up to numeration
of vertices has the form
$$ \ol A(s) = \left(\begin{array}{cc}
\ol A(s\oplus i)& -\ol{a^\top}\\
\ol a & 0
\end{array}\right).$$
Since $v_i\in S$, equality $x_i=0$ means that the ranks of the
matrices $ \left(\begin{array}{cc}
\ol A(s\oplus i)& -\ol{a^\top}\\
a & 0
\end{array}\right)$ and $ \left(\begin{array}{cc}
\ol A(s\oplus i)& -\ol{a^\top}\\
\ol a & 0\\
0& 1
\end{array}\right)$ coincide. But $$ \rank\left(\begin{array}{cc}
\ol A(s\oplus i)& -\ol{a^\top}\\
\ol a & 0\\
0& 1
\end{array}\right)=\rank\left(\begin{array}{cc}
\ol A(s\oplus i)& 0\\
0 & 1\\
\ol a& 0
\end{array}\right)\ge\rank \ol A(s\oplus i) +1.$$ So $\rank A(s) =\rank A(s\oplus i) +2$ and
$\cor A(s\oplus i) = \cor A(s)+1$.

One can see that the reasoning can be reverted and the corank
condition is equivalent to the equality $x_i=0$.
\end{proof}

\begin{coro}
$x_i=0\in V(s)$ iff $x_i\ne 0\in V(s\oplus i)$. \hfill$\square$
\end{coro}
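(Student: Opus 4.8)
The plan is to deduce the corollary directly from the preceding lemma by exploiting the symmetry of the operation $s\mapsto s\oplus i$. The crucial observation is that this operation is an involution: $(s\oplus i)\oplus i = s$. Hence the lemma can be applied twice with the same index $i$ — once to the pair $(s,s\oplus i)$ and once to the pair $(s\oplus i,\,(s\oplus i)\oplus i)=(s\oplus i,\,s)$. Writing $c=\cor A(s)$ and $c'=\cor A(s\oplus i)$, the lemma gives that $x_i=0\in V(s)$ iff $c'=c+1$, and that $x_i=0\in V(s\oplus i)$ iff $c=c'+1$, i.e. iff $c'=c-1$.

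First I would record the dichotomy that along every edge of the hypercube the corank changes by exactly one in absolute value, namely $c'-c\in\{+1,-1\}$. This is precisely the parity input already used inside the lemma: passing from $s$ to $s\oplus i$ alters the size of the skew-symmetric matrix $A$ by one, so its rank — which is necessarily even — changes by $0$ or $2$, and therefore its corank changes by $+1$ or $-1$. In particular the two events $c'=c+1$ and $c'=c-1$ are mutually exclusive and exhaust all possibilities.

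Combining the two ingredients finishes the argument: $x_i=0\in V(s)$ iff $c'=c+1$, whereas $x_i\ne 0\in V(s\oplus i)$ iff $c'\ne c-1$, which by the dichotomy is equivalent to $c'=c+1$. Chaining these equivalences yields $x_i=0\in V(s)$ iff $x_i\ne 0\in V(s\oplus i)$, as claimed. The only genuine content is the dichotomy of the second paragraph; everything else is bookkeeping with the involution $s\oplus i$. Since that dichotomy is already implicit in the rank-parity reasoning of the lemma, I expect the corollary to follow formally, with no new computation — it is a consequence of the lemma together with the evenness of the rank of a skew-symmetric matrix.
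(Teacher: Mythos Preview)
Your argument is correct and is exactly the reasoning the paper leaves implicit by marking the corollary with $\square$: the lemma characterises $x_i=0$ in terms of coranks, the operation $s\mapsto s\oplus i$ is an involution, and the parity of the rank of a skew-symmetric matrix forces $\cor A(s\oplus i)-\cor A(s)\in\{+1,-1\}$, so the two corank conditions are complementary. One small remark: when you apply the lemma a second time to the state $s\oplus i$, note that the two cases in the lemma's proof are really distinguished by whether $v_i$ belongs to the state (not by $\sgn(v_i)$ per se), so the argument transfers verbatim even though $s\oplus i$ is the target rather than the source of the oriented edge.
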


\begin{propo}[Correctness of chain maps]
For any state $s$ and index $i$ the map $\partial_{s\oplus i}^s :
\bigwedge^*V(s)\to \bigwedge^*V(s\oplus i)$ is well defined.
\end{propo}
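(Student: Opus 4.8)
The plan is to reduce well-definedness to a single computation comparing the defining relations of $V(s)$ and of $V(s\oplus i)$. Write $F=\Z\langle x_1,\dots,x_n\rangle$ for the free module on the generators, so that $V(s)=F/R^s$ and $V(s\oplus i)=F/R^{s\oplus i}$, where $R^s$ is the submodule spanned by $r^s_1,\dots,r^s_n$. Both branches of the definition of $\partial_{s\oplus i}^s$ are governed by a map $\wt\partial:\bigwedge^*F\to\bigwedge^*V(s\oplus i)$: in the case $x_i\ne 0\in V(s)$ we set $\wt\partial(u)=\ol u$, the image of $u$ under the algebra map induced by the quotient $F\to V(s\oplus i)$, and in the case $x_i=0\in V(s)$ we set $\wt\partial(u)=x_i\wedge\ol u$. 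Since the kernel of $\bigwedge^*F\to\bigwedge^*V(s)$ is the ideal generated by the degree-one submodule $R^s$, the map $\partial_{s\oplus i}^s$ is well defined precisely when $\wt\partial(r^s_k\wedge\omega)=0$ in $\bigwedge^*V(s\oplus i)$ for every $k$ and every $\omega$; so it suffices to control the image $\ol{r^s_k}$ of each relation in $V(s\oplus i)$.

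The key step is the claim that in $V(s\oplus i)$ every relation of $V(s)$ becomes a scalar multiple of $x_i$: there are constants $c_k\in\{0,\pm1\}$ with $\ol{r^s_k}=c_k\,x_i$. I would prove this by directly comparing the formulas for $r^s_k$ and $r^{s\oplus i}_k$, splitting according to $\sgn(v_i)$ (equivalently, whether the arrow $s\to s\oplus i$ adds or removes $v_i$) and, within each case, according to whether $v_k\in s$ and whether $k=i$. The only difference between the two sums defining $r^s_k$ and $r^{s\oplus i}_k$ is the summation index $j=i$, together with the standalone term $x_i$ that appears when $k=i$; collecting these contributions gives $r^s_k=r^{s\oplus i}_k\pm\ol a_{ki}\,x_i$ for $k\ne i$ and $r^s_k=r^{s\oplus i}_k\pm x_i$ for $k=i$. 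Reducing modulo $R^{s\oplus i}$, where all $r^{s\oplus i}_k=0$, yields the claim, and the constants $c_k$ lie in $\{0,\pm1\}$ by Lemma~\ref{crit1}.

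With the claim in hand the two cases close at once, and here the Corollary preceding the proposition does the essential work. If $x_i\ne 0\in V(s)$, then $x_i=0\in V(s\oplus i)$, so $\ol{r^s_k}=c_k\,x_i=0$; hence the quotient map $F\to V(s\oplus i)$ kills $R^s$ and descends to a linear map $V(s)\to V(s\oplus i)$, whose induced exterior-algebra map is $\partial_{s\oplus i}^s$. If instead $x_i=0\in V(s)$, then $x_i\ne 0\in V(s\oplus i)$ and the relations need not vanish; but now $\wt\partial(r^s_k\wedge\omega)=x_i\wedge\ol{r^s_k}\wedge\ol\omega=c_k\,(x_i\wedge x_i)\wedge\ol\omega=0$, so the wedge map descends. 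In both branches $\partial_{s\oplus i}^s$ is well defined.

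The main obstacle is the bookkeeping of the key step: one must track carefully how the summation index set shrinks or grows when $v_i$ is removed or added, and how the isolated $x_i$ term enters when $k=i$, keeping the four sub-cases straight so that every sign of $c_k$ comes out right. The computation is entirely routine; the conceptual heart is the Corollary, which guarantees that whichever branch of the definition applies, the residual multiples of $x_i$ either vanish outright or are annihilated by $x_i\wedge x_i=0$.
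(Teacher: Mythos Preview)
Your proposal is correct and follows essentially the same route as the paper: both arguments hinge on the identity $r^s_k = r^{s\oplus i}_k + \alpha_k x_i$ (your ``key step''), after which the two branches close via $x_i=0$ in the target or $x_i\wedge x_i=0$, exactly as you write. One small remark: your appeal to Lemma~\ref{crit1} to force $c_k\in\{0,\pm1\}$ is unnecessary --- any integer coefficient would do here, and indeed the paper simply writes ``for some $\alpha_j$'' without constraining its value (in fact the entries $\ol a_{ki}$ already lie in $\{0,\pm1\}$ by definition of the adjacency matrix, so no lemma is needed).
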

\begin{proof}
We must check that for any element $u$ and any index $j$ there exist
elements $u_k\in V(s\oplus i)$ such that
$$\partial_{s\oplus i}^s(r^s_j\wedge u) = \sum_k
r^{s\oplus i}_k\wedge u_k\ \in V(s\oplus i).$$

For any $j$ we have $r_j^s = r_j^{s\oplus i} +\alpha_j x_i$ for some
$\alpha_j$. If $x_i=0\in V(s\oplus i)$ then
$$\partial_{s\oplus
i}^s(r^s_j\wedge u)=r^s_j\wedge u = r^{s\oplus i}_j\wedge u+\alpha_j
x_i\wedge u=r^{s\oplus i}_j\wedge u$$ in $V(s\oplus i)$. If $x_i\ne
0\in V(s\oplus i)$ then
 $$\partial_{s\oplus i}^s(r^s_j\wedge
u)=x_i\wedge r^s_j\wedge u=x_i\wedge r^{s\oplus i}_j\wedge
u+\alpha_j x_i\wedge x_i\wedge u= \pm r^{s\oplus i}_j \wedge
(x_i\wedge u).$$ In any case the map $\partial_{s\oplus i}^s$ is
well defined.
\end{proof}

\begin{rema}
For any $s$ state and an index $i$ there are two possibilities for
the map $\partial^s_{s\oplus i}:
\bigwedge^*V(s)\to\bigwedge^*V(s\oplus i)$:
\begin{enumerate}
\item $\rank V(s\oplus i)=\rank V(s)-1$. Then $\partial^s_{s\oplus i}$ is an epimorphism with the
kernel $x_i\bigwedge^*V(s)$;
\item $\rank V(s\oplus i)=\rank V(s)+1$. Then $\partial^s_{s\oplus
i}$ is an isomorphism of $\bigwedge^*V(s)$ onto $x_i
\bigwedge^*V(s\oplus i)$.
\end{enumerate}
\end{rema}


\noindent Every 2-face of the hypercube of states looks like
$$
\xymatrix{ \bigwedge^*V(s\oplus j)\ar[r]^{\partial^{s\oplus j}_{s\oplus i\oplus j}} & \bigwedge^*V(s\oplus i\oplus j)\\
\bigwedge^*V(s)\ar[r]_{\partial^{s}_{s\oplus
i}}\ar[u]^{\partial^{s}_{s\oplus j}} & \bigwedge^*V(s\oplus
i).\ar[u]_{\partial^{s\oplus i}_{s\oplus i\oplus j}} }
$$

According to dimensions of spaces $V(s'),\ s'=s, s\oplus i, s\oplus
j, s\oplus i\oplus j$ we have five types of diagrams:

\begin{gather*}
\tar{\xymatrix{ 1\ar[r]^{x_i} & 2\\
0\ar[r]_{x_i}\ar[u]^{x_j} & 1\ar[u]_{x_j}}}{Type 1}\quad
\tar{\xymatrix{ -1\ar[r]^{1} & -2\\
0\ar[r]_{1}\ar[u]^{1} & -1\ar[u]_{1}}}{Type 2}\quad
\tar{\xymatrix{ 1\ar[r]^{1} & 0\\
0\ar[r]_{1}\ar[u]^{x_j} & -1\ar[u]_{x_j}}}{Type 3}\\
\tar{\xymatrix{ 1\ar[r]^{1} & 0\\
0\ar[r]_{x_i}\ar[u]^{x_j} & 1\ar[u]_{1}}}{Type 4}\quad
\tar{\xymatrix{ -1\ar[r]^{x_i} & 0\\
0\ar[r]_{1}\ar[u]^{1} & -1\ar[u]_{x_j}}}{Type 5}
\end{gather*}

Here the number at the place of state $s'$ is equal to $\rank
V(s')-\rank V(s)=\cor(A(s'))-\cor(A(s))$ and the label $z=1, x_i,
x_j$ at the arrow for the map $\partial^{s'}_{s''}$ means that
$\partial^{s'}_{s''}(u) =z\wedge u$.

\begin{propo}[Commutativity of 2-faces]\label{prop_commun_2face}
Any 2-face of the hypercube of states is commutative or
anticommutative.
\end{propo}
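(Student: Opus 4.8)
The plan is to verify the claim separately for each of the five types of $2$-faces listed above. The type, which is determined by the coranks $\cor A(s')-\cor A(s)$ at the four corners, completely dictates which of the two forms $u\mapsto u$ or $u\mapsto x_\bullet\wedge u$ each of the four maps takes. In every case I would write the two composites $\partial^{s\oplus i}_{s\oplus i\oplus j}\circ\partial^s_{s\oplus i}$ and $\partial^{s\oplus j}_{s\oplus i\oplus j}\circ\partial^s_{s\oplus j}$ explicitly on a general element $u\in\bigwedge^*V(s)$ and compare the results in $\bigwedge^*V(s\oplus i\oplus j)$.

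Three of the types are essentially immediate. For Type $2$ both composites are the reduction map $u\mapsto u$, so the face commutes; here one only has to note that the two intermediate reductions through $V(s\oplus i)$ and through $V(s\oplus j)$ induce the same reduction $\bigwedge^*V(s)\to\bigwedge^*V(s\oplus i\oplus j)$, which holds because all four spaces are quotients of the same free module and the maps are homomorphisms of exterior algebras. For Type $3$ both composites equal $u\mapsto x_j\wedge u$: on one path we wedge with $x_j$ in $V(s\oplus j)$ and then reduce, on the other we reduce and then wedge with $x_j$ in $V(s\oplus i\oplus j)$, and these agree because reduction sends the generator $x_j$ to $x_j$. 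For Type $1$ both maps on each path are of wedge type, so one composite is $u\mapsto x_j\wedge x_i\wedge u$ and the other is $u\mapsto x_i\wedge x_j\wedge u$; since $x_i\wedge x_j=-x_j\wedge x_i$, the face anticommutes. This is the only genuinely anticommuting case and is the source of the "odd" sign.

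The substantive cases are Types $4$ and $5$. In both, one composite is $u\mapsto x_i\wedge u$ and the other is $u\mapsto x_j\wedge u$, both landing in $\bigwedge^*V(s\oplus i\oplus j)$ after the same reduction of $u$. Thus I would reduce the whole proposition to the single claim that $x_i=\pm x_j$ as elements of $V(s\oplus i\oplus j)$: once this holds, $x_i\wedge u=\pm x_j\wedge u$ for every $u$, so the face is commutative or anticommutative according to the sign. To establish the proportionality I would work directly with the relation matrix $\ol A$. After reducing (by the $R$-move normalization) to the case $\sgn(v_i)=\sgn(v_j)=-1$, both $v_i$ and $v_j$ lie in the state $s\oplus i\oplus j$, so the relations $r^{s\oplus i\oplus j}_i$ and $r^{s\oplus i\oplus j}_j$ express $\ol a_{ij}x_j$ and $\ol a_{ji}x_i$ in terms of the generators $x_k$ with $v_k\in s$. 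The corank data — namely $\cor A(s\oplus i\oplus j)=\cor A(s)$ while $\cor A(s\oplus i)=\cor A(s)+1$ — forces $x_i$ and $x_j$ to span a one-dimensional subspace modulo $\{x_k:v_k\in s\}$, i.e.\ to be proportional; I would extract the precise relation by eliminating the $x_k$ through the homogeneous relations $r^{s\oplus i\oplus j}_k$ with $v_k\in s$ and comparing with the two dependencies $(\ol a_{ik})$ and $(\ol a_{jk})$ that witness $x_i=0$ and $x_j=0$ in $V(s)$.

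The main obstacle is exactly this last step: showing that the constant relating $x_i$ and $x_j$ in $V(s\oplus i\oplus j)$ is $\pm1$ rather than a larger integer. This is where principal unimodularity enters decisively. I expect to obtain the constant as a ratio, or a bordered minor, of entries of $\ol A$, and then to invoke Lemma~\ref{crit1} — every minor of $A$, hence of $\ol A$, equals $0$ or $\pm1$ — to conclude that the constant is a unit in $\Z$, necessarily $\pm1$. Tracking the actual sign through the two sign cases for $(\sgn v_i,\sgn v_j)$ and through the orientation convention on the hypercube edges is the bookkeeping-heavy part, but once $x_i=\pm x_j$ is secured in $V(s\oplus i\oplus j)$ the commutativity-or-anticommutativity of Types $4$ and $5$, and hence of every $2$-face, follows at once.
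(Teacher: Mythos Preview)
Your treatment of Types 1--3 matches the paper's. For Type~4 you are working harder than necessary: in that configuration $\cor A(s\oplus i\oplus j)=\cor A(s\oplus j)-1$, so by the corank criterion and its corollary one has $x_i=0$ in $V(s\oplus i\oplus j)$, and symmetrically $x_j=0$. Both composites therefore vanish and the face is commutative as a zero face; the paper dispatches Type~4 in one line this way.

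The genuine gap is in Type~5. The step ``reduce by the $R$-move normalization to $\sgn(v_i)=\sgn(v_j)=-1$'' does not work: the move $R$ reverses the orientations of edges of $G$, it does not alter the vertex labels $\sgn$. If instead you flip a label directly, the hypercube edge $s\to s\oplus i$ reverses, so the terminal state of the face changes to a \emph{different} one of the four states (a single sign flip turns a Type~5 face into a Type~4 face with a new terminal vertex), and you are then proving a different relation in a different $V(\cdot)$. The sign and bipartition configurations for Type~5 are therefore not interchangeable; the paper handles them one by one (both signs $-1$; mixed; both $+1$, each crossed with $v_i,v_j\in\V_0$ or $\V_1$), showing several subcases impossible and establishing $x_i=\pm x_j$ in the rest. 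The principal tool there is the freeness of $V(s\oplus i\oplus j)$: one produces integers $\alpha,\beta$ with $x_i+\alpha x_j=0$ and $\beta x_i+x_j=0$, derives $(1-\alpha\beta)x_j=0$, and concludes $\alpha=\beta=\pm1$ since $x_j\ne 0$ in a free module. Your alternative plan --- exhibit the constant directly as a minor and invoke Lemma~\ref{crit1} --- does appear in one mixed-sign subcase of the paper's argument, but making it work uniformly would require a Cramer-type identity that is not immediate from the corank data alone.
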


\begin{proof}
2-faces of type 1 are anticommutative. 2-faces of types 2,3 are
commutative. A 2-face of type 4 is commutative because $x_i=x_j=0\in
V(s\oplus i\oplus j)$.

We need to look at type 5 more attentively. There are several
possibilities.

1. $\sgn(v_i)=\sgn(v_j)=-1$. Then $v_i, v_j\in s\oplus i\oplus j$.
We can assume that $v_i$ and $v_j$ are the last vertices in $s\oplus
i\oplus j$.

1.1. $v_i,v_j\in\V_0$. The relation matrix $\ol A(s\oplus i\oplus
j)$ can be represented in the form
\begin{equation}\label{Bab1}
\bega{cccc}
& & & \ol B \\
&0 & & \ol a \\
& & & \ol b\\
\ol {-B^\top} &\ol {-a^\top} &\ol{-b^\top}&  0\ena
\end{equation}
where $B=B(s)$. We have $x_i=0\in V(s\oplus i)$. Then the rows of
the matrix $(\ol{-B^\top}\ \ol{-a^\top})$ generate the vector $(0\
1)$. Hence the rows of the matrix $(\ol{-B^\top}\ \ol{-a^\top}\
\ol{-b^\top})$ generate the vector $(0\ 1\ \alpha),\ \alpha\in\Z$.
Analogously, the matrix $(\ol{-B^\top}\ \ol{-a^\top}\ \ol{-b^\top})$
generate the vector $(0\ \beta\ 1)$. These two vectors generate the
vector $(0\ 0\ 1-\alpha\beta)$. If $\alpha\beta\ne 1$ then
$(1-\alpha\beta)x_j=0\in V(s\oplus i\oplus j)$. Since $V(s\oplus
i\oplus j)$ is free we would have $x_j=0\in V(s\oplus i\oplus j)$
but this is not the case. Thus, $\alpha\beta=1$ and
$\alpha=\beta=\pm 1$. Then $x_i\pm x_j=0\in V(s\oplus i\oplus j)$ so
the square is commutative or anticommutative.

1.2. $v_i\in\V_0, v_j\in\V_1$. The relation matrix $\ol A(s\oplus
i\oplus j)$ can be represented in the form
\begin{equation}\label{Bab2}
\bega{cccc}
0&0 & \ol B& \ol{b^\top} \\
0&0 & \ol a & \ol\alpha \\
\ol {-B^\top}& \ol{-a^\top}& 0& 0\\
 \ol{-b}& \ol{-\alpha} & 0&  0\ena.
\end{equation}

Since $x_i=0\in V(s\oplus j)$ the vector $(\ol a\ \ol\alpha)$
depends
on the rows of the matrix $\rank \bega{cc} \ol B &\ol{b^\top}\\
\ol a& \ol\alpha\ena$. Then the vector $\ol a$ depends on the rows
of $\ol B$ so $x_i=0\in V(s)$ but this is not true. Thus, this case
is impossible.

1.3.  $v_i,v_j\in\V_1$. This case can be considered analogously to
the case 1.1.

2. $\sgn(v_i)=-1,\ \sgn(v_j)=1$. Then $v_i\in s\oplus i\oplus j$ and
$v_j\not\in s\oplus i\oplus j$.

2.1. $v_i,v_j\in\V_0$. Without loss of generality we can assume that
the relation matrix $\ol A(s\oplus i)$ has the form~(\ref{Bab1})
where $B=B(s\oplus j)$. Since $x_i=0\in V(s\oplus j)$ then $\rank
\bega{c} \ol B\\ \ol a\ena = \rank \ol B$. Since $x_j=0\in V(s\oplus
j)$ then $\rank \bega{c} \ol B\\ \ol b\ena = \rank \ol B$. Hence,
$\rank \bega{c} \ol B\\ \ol a\\ \ol b\ena = \rank B$ but $\rank
\bega{c} \ol B\\ \ol a\\ \ol b\ena = \rank \bega{c} \ol B\\ \ol
b\ena+1$ because $x_i=0\in V(s\oplus i)$. Thus, this case is
impossible.

2.2.  $v_i\in\V_0, v_j\in\V_1$. The relation matrix $\ol A(s\oplus
i)$ can be represented in the form~\eqref{Bab2} with $B=B(s\oplus
j)$. Since $x_j=0\in V(s\oplus j)$ we have
$\rank\bega{c}\ol{-B^\top}\\ \ol{-b}\ena =\rank\ol{-B^\top}$. Then
the vector $\ol{-b}$ is generated by the rows of $\ol{-B^\top}$ so
the row of the matrix $\bega{cc}\ol{-B^\top} & \ol{-a^\top}\ena$
generate the vector $(\ol{-b} \ \gamma)$. Hence the matrix
$\bega{cc}\ol{-B^\top}& \ol{-a^\top}\\
 \ol{-b}& \ol{-\alpha}\ena$ is equivalent by row transformations to
 the matrix $\bega{cc}\ol{-B^\top}& \ol{-a^\top}\\
 0& \delta\ena$ where $\delta =\ol{-\alpha}-\gamma$. If $\delta=0$
then $\rank \bega{cc}\ol{-B^\top}& \ol{-a^\top}\\
 \ol{-b}& \ol{-\alpha}\ena=\rank \bega{cc}\ol{-B^\top}&
 \ol{-a^\top}\ena$ and $x_j=0\in V(s\oplus i\oplus j)$ but this is
 not true.

 If $|\delta|>1$ then we must have $B=0$ (otherwise we can
 find a minor in $\ol A(G)$ which is not equal to $0$ and is a
 multiple of $\delta$). Hence, $b=0$. If $\ol{-\alpha}=0$ then $x_j=0\in V(\oplus_i\oplus
 j)$ that is not true. If $\ol{-\alpha}=\pm1$ then $x_i\mp x_j=0\in V(\oplus_i\oplus
 j)$.

 If $\delta=\pm 1$ then we have $x_i\mp x_j=0\in V(\oplus_i\oplus
 j)$.

2.3.  $v_i\in\V_1, v_j\in\V_0$. This case is considered analogously
the case 2.2.

2.4. $v_i,v_j\in\V_1$. This case is impossible by the same reason as
the case 2.1.

\noindent 3. $\sgn(v_i)=\sgn(v_j)=-1$. Then $v_i, v_j\in s\oplus
i\oplus j$.

3.1. $v_i,v_j\in\V_0$. Then the matrix $\ol A(s)$ looks
like~(\ref{Bab1}) where $B=B(s\oplus i\oplus j)$. Since $x_i=0\in
V(s\oplus i)$ we have $\rank \bega{c}\ol B\\ \ol a\\ \ol b\ena =
\rank \bega{c}\ol B\\ \ol b\ena$. Then the vector $\ol a$ is
generated by the rows of the matrix $\ol B$ and the vector $\ol b$.
It means that $x_i=\alpha x_j\in V(s\oplus i\oplus j)$. On the other
hand, the same reasoning for $x_j$ leads to the equality $x_i=\beta
x_j\in V(s\oplus i\oplus j)$ so $x_i=\alpha\beta x_i$. If
$\alpha\beta\ne 1$ then $(1-\alpha\beta) x_i=0$ implies $x_i=0$
since $V(s\oplus i\oplus j)$ is free. But $x_i\ne 0\in V(s\oplus
i\oplus j)$. Hence, $\alpha\beta=1$, so $\alpha=\beta=\pm 1$ and
$x_i\mp x_j=0\in V(s\oplus i\oplus j)$.

3.2. $v_i\in\V_0, v_j\in\V_1$. The relation matrix $\ol A(s)$ can be
represented in the form~\eqref{Bab2} with $B=B(s\oplus i\oplus j)$.
Since $x_i\ne 0\in V(s\oplus i\oplus j)$ we have $\rank \bega{c}\ol
B\\ \ol a\ena=\rank \ol B+1$. The equality $x_j\ne 0\in V(s\oplus
i\oplus j)$ implies $\rank \bega{cc}\ol B & \ol {b^\top}\ena=\rank
\ol B+1$. Then $\rank \bega{cc}\ol B & \ol {b^\top}\\ ol a&
\ol\alpha\ena= \rank \bega{c}\ol B\\ \ol a\ena+1$. But $\rank
\bega{cc}\ol B & \ol {b^\top}\\ \ol a& \ol\alpha\ena= \rank
\bega{c}\ol B\\ \ol a\ena$ since $x_j\ne\in V(s)$. Thus, this case
is impossible.

3.3. The case $v_i,v_j\in\V_1$ is analogous to the case 3.1.

Thus, any diagram of type 5 is commutative or anticommutative.
\end{proof}

\begin{rema}
Following~\cite{ORS} we introduce another classification of two
faces: anticommutative faces (type A), commutative faces (type C)
and zero faces (types X and Y). 2-faces of type 1 have type A.
2-faces of types 2,3 have type C. Ass for diagrams of type 5 we
assign to commutative diagrams the type C and to anticommutative
ones the type A. A 2-face of type 4 is a zero face because
$x_i=x_j=0\in V(s\oplus i\oplus j)$, below we assign this face to
type X or Y.
\end{rema}

We call a vertex $v\in V$ {\em inner} if $v\in\V_0$ and $\sgn(v)=-1$
or $v\in\V_1$ and $\sgn(v)=1$. Otherwise $v$ is {\em outer}.

Diagrams of type 4 differ from diagrams of type 5 by a sign of the
vertex $v_i$ or $v_j$. So the consideration of possible cases among
cases 1.1-3.3 in Proposition~\ref{prop_commun_2face} shows this
statement is true.

\begin{lem}\label{lem_commun1}
1. In any diagram of type 5 the vertices $v_i, v_j$ are either both
inner or both outer.\\
2. In any diagram of type 4 one the vertices $v_i, v_j$ is inner and
the other is outer.\hfill$\Box$
\end{lem}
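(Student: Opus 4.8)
The plan is to exploit the case analysis already carried out in the proof of Proposition~\ref{prop_commun_2face} and simply read the inner/outer dichotomy off the surviving cases. For the first statement I would revisit the subcases 1.1--3.3 of that proof and discard the ones shown there to be impossible (namely 1.2, 2.1, 2.4 and 3.2), so that the type~5 diagrams which actually occur are exactly 1.1, 1.3, 2.2, 2.3, 3.1 and 3.3. In each surviving case both $\sgn(v_i),\sgn(v_j)$ and the classes $\V_0,\V_1$ containing $v_i,v_j$ are pinned down, so it is a one-line check against the definition of \emph{inner}/\emph{outer} to see that $v_i$ and $v_j$ always share their status: in 1.1, 3.3 and 2.2 both vertices are inner, while in 1.3, 3.1 and 2.3 both are outer. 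This exhausts the list and proves the first statement, which I would present as a short table rather than spelling out each verification.

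For the second statement the key observation is that a diagram of type~4 is obtained from one of type~5 by reversing the two arrows in the $i$-direction (equivalently, the two in the $j$-direction), which is exactly the effect of flipping the single sign $\sgn(v_i)$ (resp. $\sgn(v_j)$). To make this precise I would record the corank as a height function $c$ on the square $\{0,1\}^2$, note that along each edge $c$ changes by $\pm1$, and check that relabelling the origin to the old state $s\oplus i$ turns the type~5 corner pattern $(0,-1,-1,0)$ into the type~4 pattern $(0,+1,+1,0)$ (and, crucially, not into the asymmetric type~3 pattern). Since neither the determinant condition defining PU-orientation nor the corank function $\cor A(\cdot)$ depends on the labelling $\sgn$, the sign-flipped data is again a PU-oriented graph, and the relabelled square is a genuine type~5 diagram, its arrow labels $1,x_i,x_j$ being forced by $c$ through the lemma relating $x_i=0\in V(s)$ to the jump of corank. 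Applying the first statement to this sign-flipped diagram, $v_i$ and $v_j$ there have the same status; flipping back toggles exactly the status of $v_i$ while leaving that of $v_j$ untouched, so in the original type~4 diagram one vertex is inner and the other outer.

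The main obstacle is the bookkeeping in this last reduction: one must verify carefully that a single sign flip interchanges the two roles along the $i$-direction in precisely the way that converts the type~5 corank pattern into the type~4 pattern, and that the arrow labels transform accordingly. This is a small but delicate computation with the height function $c$, where an off-by-one or a misidentified corner would break the correspondence; everything else is a direct appeal to the definition of inner/outer and to the already-established list of admissible subcases.
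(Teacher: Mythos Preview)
Your proposal is correct and follows the same route the paper indicates: the paper's entire argument is the one-line remark preceding the lemma, namely that type~4 and type~5 diagrams differ only by flipping $\sgn(v_i)$ (or $\sgn(v_j)$), so that the case analysis 1.1--3.3 from Proposition~\ref{prop_commun_2face} settles both statements. Your write-up simply makes explicit the two steps the paper leaves implicit --- tabulating the surviving subcases to verify statement~1, and carrying out the sign-flip/relabelling computation to deduce statement~2 --- so there is nothing to add beyond noting that the paper's case~3 header contains a typo (it should read $\sgn(v_i)=\sgn(v_j)=1$), which you have silently corrected in identifying 3.1 as ``both outer'' and 3.3 as ``both inner''.
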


Let us consider a 2-face of type 4 and let $v_i$ be the inner vertex
of the face. We assign the face to the {\em type X} if $x_i=x_j\in
V(s\oplus i)$ and assign to the {\em type Y} if $x_i=-x_j\in
V(s\oplus i)$.

\begin{lem}\label{lem_commun2}
Let us consider a 2-face of type 5. Then \begin{itemize}
\item if $\sgn(v_i)=\sgn(v_j)$ we have $x_i\pm x_j=0\in V(s)$
$\Leftrightarrow$ $x_i\mp x_j=0\in V(s\oplus i\oplus j)$; \item  if
$\sgn(v_i)\ne\sgn(v_j)$ we have $x_i\pm x_j=0\in V(s)$
$\Leftrightarrow$ $x_i\pm x_j=0\in V(s\oplus i\oplus j)$.
\end{itemize}
For a 2-face of type 4 we have \begin{itemize}
\item if $\sgn(v_i)=\sgn(v_j)$ then $x_i\pm x_j=0\in V(s\oplus i)$
$\Leftrightarrow$ $x_i\pm x_j=0\in V(s\oplus j)$;
\item  if $\sgn(v_i)\ne\sgn(v_j)$ then $x_i\pm x_j=0\in V(s\oplus i)$
$\Leftrightarrow$ $x_i\mp x_j=0\in V(s\oplus j)$.
\end{itemize}
\end{lem}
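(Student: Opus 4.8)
The plan is to reduce every assertion to one linear-algebra dictionary and then to run through the same case division $1.1$--$3.3$ (by the signs of $v_i,v_j$ and by their membership in $\V_0$ or $\V_1$) that was already used in the proof of Proposition~\ref{prop_commun_2face}. The dictionary is this: since $V(t)$ is free, its lattice of relations is saturated, so for a state $t$ and integer coefficients $c_k$ the relation $\sum_k c_k x_k=0\in V(t)$ holds if and only if the vector $(c_k)$ lies in the rational row space of $\ol A(t)$, equivalently $(c_k)\perp\ker_{\mathbb{Q}}\ol A(t)$. For a vertex $v_p$ I record its coefficient vector $\xi_p$: if $v_p\in t$ then $\xi_p=e_p$, while if $v_p\notin t$ then $r_p^t$ gives $x_p=\sum_{v_k\in t}\sgn(v_k)a_{pk}x_k$, so $(\xi_p)_k=\sgn(v_k)a_{pk}=-\ol a_{pk}$. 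Thus $x_i\pm x_j=0\in V(t)$ is equivalent to $\langle \xi_i\pm\xi_j,\,w\rangle=0$ for every $w\in\ker_{\mathbb{Q}}\ol A(t)$.

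Both statements compare two corners of the $2$-face whose $V$-ranks coincide and at which $x_i,x_j$ are both nonzero: for a type~$5$ face these are $V(s)$ and $V(s\oplus i\oplus j)$, and for a type~$4$ face they are $V(s\oplus i)$ and $V(s\oplus j)$. In each case the two corners differ by toggling the membership of $v_i$ and of $v_j$. The crux is that the factor $\sgn(v_k)$ enters $\xi_p$ exactly when $v_p$ is \emph{outside} the relevant state; hence toggling $v_i$ (resp.\ $v_j$) across an edge of the hypercube introduces one sign governed by $\sgn(v_i)$ (resp.\ $\sgn(v_j)$) into $\langle\xi_i,w\rangle$ (resp.\ $\langle\xi_j,w\rangle$). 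First I would set up, in each feasible case, the bijection between the kernels of $\ol A$ at the two compared corners using the block forms~\eqref{Bab1} and~\eqref{Bab2}: writing a kernel vector of $\ol A(s\oplus i\oplus j)$ as $(u,p,q,z)$ according to the blocks of~\eqref{Bab1}, its equations are $\ol a z=0$, $\ol b z=0$ and $\ol B^\top u+\ol a^\top p+\ol b^\top q=0$ together with $\ol B z=0$, which under the type-$5$ rank conditions match the kernel of $\ol A(s)$. Computing how $\langle\xi_i,w\rangle$ and $\langle\xi_j,w\rangle$ transform under this bijection, one finds that when $\sgn(v_i)=\sgn(v_j)$ the two toggles go the same way and their signs combine to a single reversal of the relative sign, giving $x_i\pm x_j=0\in V(s)\Leftrightarrow x_i\mp x_j=0\in V(s\oplus i\oplus j)$, whereas when $\sgn(v_i)\neq\sgn(v_j)$ the two contributions cancel and the combination is preserved. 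The impossible configurations are exactly those excluded in cases $1.2,2.1,2.4,3.2$ of Proposition~\ref{prop_commun_2face}, so only the feasible cases $1.1,1.3,2.2,2.3,3.1,3.3$ require the computation.

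For the type~$4$ statement I would invoke the observation recorded before Lemma~\ref{lem_commun1}: a type~$4$ face is a type~$5$ face with the sign of one of $v_i,v_j$ reversed (equivalently, by Lemma~\ref{lem_commun1}, one vertex is inner and the other outer). Reversing a single sign simultaneously (a)~interchanges the hypotheses $\sgn(v_i)=\sgn(v_j)$ and $\sgn(v_i)\neq\sgn(v_j)$, and (b)~moves the pair of "both nonzero" corners from $\{s,\,s\oplus i\oplus j\}$ to $\{s\oplus i,\,s\oplus j\}$. Tracking both effects through the same pairing computation turns the type~$5$ conclusion into the type~$4$ conclusion, with the flip/non-flip behaviour of the signs interchanged exactly as stated; the $X/Y$ definition merely names which of $x_i\pm x_j$ vanishes in $V(s\oplus i)$.

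The main obstacle is the sign bookkeeping: reconciling the two independent sign systems---the vertex labels $\sgn(v)$ and the skew-symmetric orientation signs $a_{ij}$ encoded through $\ol a_{ij}=-\sgn(v_j)a_{ij}$---while respecting the asymmetry between $\V_0$ and $\V_1$, since a toggle of a $\V_0$-vertex and of a $\V_1$-vertex enter the pairing through a row versus a column of $\ol B$. Establishing the single uniform claim "the relative sign flips if and only if $\sgn(v_i)=\sgn(v_j)$" across all feasible cases is where the care is needed; once the kernel bijection is written out explicitly in one representative subcase (say $1.1$), the remaining cases follow by the same manipulation.
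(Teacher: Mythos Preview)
Your outline is sound and would ultimately succeed, but the route is genuinely different from the paper's and somewhat heavier. You reformulate ``$x_i\pm x_j=0$'' as orthogonality of $\xi_i\pm\xi_j$ to $\ker_{\mathbb Q}\ol A(t)$ and then try to transport this condition along a bijection between the kernels at the two relevant corners. The paper instead stays on the row-span side and never constructs any kernel map: it writes the relation at one corner as a row combination with coefficients $\lambda_k$, the relation at the other corner with coefficients $\mu_l$, and then evaluates the single bilinear sum $\sum_{k,l}\lambda_k\mu_l a_{kl}$ in two ways using the skew-symmetry $a_{kl}=-a_{lk}$. Carried out in the representative subcases (your $1.1$ and $2.2$) this immediately collapses to the identity $\alpha\beta=-\sgn(v_i)\sgn(v_j)$, which is exactly the statement of the lemma. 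The paper's device replaces your ``kernel bijection plus transformation of the pairing'' by one scalar double sum, so the sign bookkeeping you flag as the main obstacle is absorbed into a single cancellation.

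The place where your plan is thinnest is precisely that bijection: from the block equations you wrote for $\ker\ol A(s\oplus i\oplus j)$ one does not get a canonical identification with $\ker\ol A(s)$ by mere projection or extension (the extra constraints $\ol a z=0$, $\ol b z=0$ and the perturbed equation $\ol B^\top u+\ol a^\top p+\ol b^\top q=0$ do not reduce to the $\ol A(s)$-kernel equations without further input from the type-5 rank pattern). Making that identification explicit and tracking $\langle\xi_i,w\rangle,\langle\xi_j,w\rangle$ through it will, when unwound, reproduce the paper's bilinear computation; but the paper's formulation gets there in one line. Your reduction of the type~4 statement to the type~5 statement via a single sign flip is the same as the paper's (the paper states it more tersely), and your appeal to the feasibility analysis $1.1$--$3.3$ is appropriate.
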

\begin{proof}
Since any diagram of type 4 can be transformed in a diagram of type
5 by change of sign of the vertex $v_i$ we can prove this lemma only
for diagrams of type 5.

Let us denote $s_\alpha=s\cap\V_\alpha,\ \alpha=0,1$.

1.\ Assume at first that $\sgn(v_i)=\sgn(v_j)=-1$ and $v_i, v_j\in
V_0$. Then $v_i,v_j\not\in s$. We have $x_i+\alpha x_j=0\in V(s)$.
This means there exist coefficients $\lambda_k$ where $v_k\in s_0$
such that $$x_i+\alpha x_j=r^s_i+\alpha r^s_j+\sum_{k: v_k\in
s_0}\lambda_k r^s_k\in \Z\langle x_l\ |\ v_l\in
s_1\rangle\oplus\Z\langle x_i,x_j\rangle.$$ In other words, we have
equations $\sum_k\lambda_k
\sgn(v_l)a_{kl}+\sgn(v_l)a_{il}+\alpha\sgn(v_l)a_{jl}=0$ for any
$l\in s_1$. Then $\sum_k\lambda_k a_{kl}=-a_{il}-\alpha\cdot
a_{jl}$.

The equality $x_i+\beta x_j\in V(s\oplus i\oplus j)$ means that
there exist $\mu_l,\ v_l\in  (s\oplus i\oplus j)_1=s_1$ such that
$$
x_i+\beta x_j=\sum_{l\ :\ v_l\in (s\oplus i\oplus j)_1}\mu_l
r^{s\oplus i\oplus j}_l\in \Z\langle x_k\ |\ v_k\in (s\oplus i\oplus
j)_0\rangle.
$$ This
is equivalent to the system of equations:
$$\sum_l \mu_l a_{lk}=0,\
v_k\in s_0,\quad \sum_l \mu_l a_{li}=-\sgn(v_i),\quad \sum_l \mu_l
a_{lj}=-\beta\sgn(v_j).
$$

Then \begin{multline*} \sum_{(k,l)\ :\ v_k\in s_0,\ v_l\in
s_1}\lambda_k\mu_l a_{kl}=\sum_l\mu_l\sum_k\lambda_k
a_{kl}=-\sum_l\mu_l (a_{il}+\alpha\cdot a_{jl})=\\
\sum_l\mu_l
a_{li}+\alpha\sum_l\mu_la_{lj}=-\sgn(v_i)-\alpha\beta\sgn(v_j).
\end{multline*}
On the other hand, $\sum\limits_{k,l}\lambda_k\mu_l
a_{kl}=-\sum_k\lambda_k\sum_l\mu_la_{lk}=0.$ Thus,
$\sgn(v_i)\sgn(v_j)+\alpha\beta=0$ so $\alpha=-\beta$ that proves
the statement of the lemma.

The case $v_i, v_j\in V_1$ is considered analogously.

2.\ Assume that $\sgn(v_i)=-1,\ \sgn(v_j)=1$ and $v_i\in V_0,\
v_j\in V_1$. We denote $s'_\alpha= (s\oplus j)\cap \V_\alpha,\
\alpha=0,1$. Then $s_0=s'_0,\ s_1=s'_1\cup\{v_j\}$ and $(s\oplus
i\oplus j)_0=s'_0\cup\{v_i\},\ (s\oplus i\oplus j)_1=s'_1$.

The identity $x_i+\alpha x_j=0\in V(s)$ means there exist
$\lambda_k,\ v_k\in s_0,$ such that
$$ x_i+\alpha x_j = r^s_i+\sum_{k\ :\ v_k\in s'_0}\lambda_k r^s_k \in \Z\langle x_k\ |\ v_k\in
s_1\rangle\oplus\Z\langle x_i\rangle.
$$
Then we have equations
$$\sum_k\lambda_k a_{kl}+a_{il}=0,\ v_l\in
s'_1,\quad \sum_k\lambda_k a_{kj}+a_{ij}=-\alpha\sgn(v_j).$$

The identity $x_i+\beta x_j=0\in V(s\oplus i\oplus j)$ leads to
equations
$$\sum\limits_{l\ :\ v_l\in s'_1}\mu_l a_{lk}=\beta
a_{jk},\ v_k\in s'_0,\quad \sum\limits_{l\ :\ v_l\in s'_1}\mu_l
a_{li}+\beta a_{ji}=-\sgn(v_i).$$

Then
$$\sum\limits_{(k,l)\ :\ v_k\in s'_0,\ v_l\in
s'_1}\lambda_k\mu_l a_{kl}=\sum_l\mu_l\sum_k\lambda_k
a_{kl}=-\sum_l\mu_l a_{il}=\sgn(v_i)+\beta a_{ji}.
$$
On the other hand,
\begin{multline*} \sum_{(k,l)\ :\ v_k\in s'_0,\ v_l\in
s'_1}\lambda_k\mu_l a_{kl}=-\sum_k\lambda_k\sum_l\mu_l
a_{lk}=\beta\sum_k\lambda_l a_{kj}=-\beta a_{ij}-\alpha\beta
\sgn(v_j).
\end{multline*}
Since $a_{ji}=-a_{ij}$ we have $\alpha\beta=-\sgn(v_i)\sgn(v_j)=1$.
Thus $x_i\pm x_j=0\in V(s)$ iff $x_i\pm x_j=0\in V(s\oplus i\oplus
j)$.

\noindent 3.\ The case $\sgn(v_i)=\sgn(v_j)=1$ is considered
analogously the case 1.
\end{proof}

Let us consider a 2-face of type 4. We assign the face to the {\em
type X} if $x_i=\sgn(v_j)x_j\in V(s\oplus i)$ (by
Lemma~\ref{lem_commun2} this is equivalent to $x_i=\sgn(v_i)x_j\in
V(s\oplus j)$) and assign to the {\em type Y} if
$x_i=-\sgn(v_j)x_j\in V(s\oplus i)$.

\medskip

\noindent{\em Edge assignment.}

Let us denote the set of the edges in the hypercube as $\E$. We call
{\em edge assignment} any map $\eps : \E\to
\{\pm1\}$(see~\cite{ORS}). A 2-face is called {\em even} (resp.{\em
odd}) if it contains even (resp. odd) number of edges $e$ with
$\eps(e)=-1$. A {\em type X edge assignment} is an edge assignment
such that all faces of type A and X are even and all faces of type C
and Y are odd. Similarly,  {\em type Y edge assignment} is an edge
assignment for which faces of type A and Y are even and faces of
type C and X are odd.

\begin{lem}
Each cube in the hypercube contains an even number of squares of
type A and X. Similarly, each cube contains an even number of
squares of type A and Y.
\end{lem}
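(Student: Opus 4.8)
The plan is to verify the parity statement locally on each 3-dimensional subcube and reduce it to a counting identity about the types of the six 2-faces. A 3-cube is determined by a base state $s$ and three distinct indices $i, j, k$; its six 2-faces are the pairs of opposite faces obtained by fixing one of the three coordinates at either value. First I would recall that each 2-face has a type in $\{1,2,3,4,5\}$, and that types A and X together comprise exactly those faces that a type X edge assignment must make even (type A faces of type 1 and the anticommutative faces of type 5, together with the type X faces among the type 4 faces). So the assertion ``each cube contains an even number of squares of type A and X'' is a statement purely about how the type labels can be distributed over the six faces of a cube.

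The key tool is Lemma~\ref{lem_commun1} together with the inner/outer dichotomy of vertices. My approach is to classify each 2-face of the cube by the pair $(\text{in/out of }v_a, \text{in/out of }v_b)$ of its two active vertices, where $v_a, v_b$ range over the two indices not fixed on that face. By Lemma~\ref{lem_commun1}, a type 5 face has its two active vertices of the \emph{same} inner/outer status, whereas a type 4 face has them of \emph{opposite} status; types 1, 2, 3 are governed by the corank jumps and do not interact with this dichotomy in the same way. The idea is that, for a fixed unordered pair of indices $\{a,b\}$ among $\{i,j,k\}$, the two opposite faces using those two active coordinates carry a constrained joint type, and summing the ``A or X'' indicator over all six faces should telescope to an even total.

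Concretely, I would fix the inner/outer labels of $v_i, v_j, v_k$ (there are the cases: all three inner, two inner one outer, one inner two outer, all three outer) and, in each case, tabulate which of the six faces can be of type A or X and show the count is even. The mechanism I expect to drive the parity is Lemma~\ref{lem_commun2}: it relates the relation $x_a \pm x_b = 0$ on one face of an opposite pair to the corresponding relation on the other, and hence forces the type-X-versus-type-Y (equivalently A-versus-C) designations on opposite faces to agree or to flip in a prescribed way depending on whether $\sgn(v_a)=\sgn(v_b)$. Propagating these relations around the cube — essentially composing the ``$x_a=\pm x_b$'' identities along the three coordinate directions and checking the product of signs around the cube is $+1$ — should yield that the number of $\{$A, X$\}$ faces is even. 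The symmetric statement for A and Y follows by the same argument with the roles of X and Y (equivalently the sign conventions in Lemma~\ref{lem_commun2}) interchanged.

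The main obstacle I anticipate is the bookkeeping of signs when a coordinate direction mixes the parts $\V_0$ and $\V_1$, i.e. when $v_a \in \V_0$ but $v_b \in \V_1$, since then $\sgn$ and the inner/outer status decouple and the sign-propagation in Lemma~\ref{lem_commun2} switches between the $\pm$ and $\mp$ cases. The delicate point is to ensure that these sign switches, accumulated around a closed loop on the cube, always multiply to $+1$; a potential pitfall is a case where a single coordinate direction is a corank-jump direction (type 1 or type 2 behavior) on one opposite pair but a type 4/5 behavior on another, so the clean ``opposite faces have correlated types'' picture needs care. I would handle this by treating the corank data $\cor A(s')$ at all eight vertices of the cube as the primary invariant and deriving the face types from it, rather than reasoning face-by-face, so that the global consistency of coranks automatically enforces the parity.
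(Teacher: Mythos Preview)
Your proposal is essentially the paper's approach: both reduce to a local case analysis on a 3-cube using Lemmas~\ref{lem_commun1} and~\ref{lem_commun2}, and your final paragraph's suggestion to take the corank data $\cor A(s')$ at the eight vertices as the primary invariant is exactly what the paper does. The paper carries this out by listing the 18 possible corank configurations explicitly, discarding six of them via the inner/outer constraint of Lemma~\ref{lem_commun1}, counting face types directly when ranks determine everything, using a projection between the two opposite undetermined faces in the intermediate cases, and --- for the configurations where three type-4/5 faces meet at a single state --- applying precisely your sign-propagation idea (flipping any one $\sgn(v_\ast)$ or any one relation $x_a=\pm x_b$ changes exactly two of the three face types).
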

\begin{proof}
The proof is an analysis of all possible configurations of cubes
using Lemmas~\ref{lem_commun1},\ref{lem_commun2}.
$$
\xymatrix{ & \bigwedge^*V(s\oplus i\oplus k) \ar[rr] & & \bigwedge^*V(s\oplus i\oplus j\oplus k) \\
\bigwedge^*V(s\oplus i) \ar[ur]\ar[rr] & & \bigwedge^*V(s\oplus i\oplus j)\ar[ur] &\\
& \bigwedge^*V(s\oplus k)\ar'[r][rr]\ar'[u][uu] && \bigwedge^*V(s\oplus j\oplus k)\ar[uu]\\
\bigwedge^*V(s)\ar[rr]\ar[ur]\ar[uu] && \bigwedge^*V(s\oplus
j)\ar[ur]\ar[uu] &}
$$
There are 18 possible cubes (up to symmetry of axes). Below the
number at the place of state $s'$ in the cube is equal to $\rank
V(s')-\rank V(s)$. These cases can be classified into the following
groups.

$$
\tar{\xymatrix@!0{ & 2 \ar[rr] & & 3 \\
1 \ar[ur]\ar[rr] & & 2 \ar[ur] &\\
& 1\ar'[r][rr]\ar'[u][uu] && 2\ar[uu]\\
0\ar[rr]\ar[ur]\ar[uu] && 1\ar[ur]\ar[uu] &}}{case 1}\quad
\tar{\xymatrix@!0{ & 2 \ar[rr] & & 1 \\
1 \ar[ur]\ar[rr] & & 2 \ar[ur] &\\
& 1\ar'[r][rr]\ar'[u][uu] && 2\ar[uu]\\
0\ar[rr]\ar[ur]\ar[uu] && 1\ar[ur]\ar[uu] &}}{case 2}\quad
\tar{\xymatrix@!0{ & 2 \ar[rr] & & 1 \\
1 \ar[ur]\ar[rr] & & 2 \ar[ur] &\\
& 1\ar'[r][rr]\ar'[u][uu] && 0\ar[uu]\\
0\ar[rr]\ar[ur]\ar[uu] && 1\ar[ur]\ar[uu] &}}{case 3}
$$

$$
\tar{\xymatrix@!R0{ & 0 \ar[rr] & & 1 \\
1 \ar[ur]\ar[rr] & & 0 \ar[ur] &\\
& 1\ar'[r][rr]\ar'[u][uu] && 2\ar[uu]\\
0\ar[rr]\ar[ur]\ar[uu] && 1\ar[ur]\ar[uu] &}}{case 4}\quad
\tar{\xymatrix@!0{ & 0 \ar[rr] & & 1 \\
1 \ar[ur]\ar[rr] & & 0 \ar[ur] &\\
& 1\ar'[r][rr]\ar'[u][uu] && 0\ar[uu]\\
0\ar[rr]\ar[ur]\ar[uu] && 1\ar[ur]\ar[uu] &}}{case 5}\quad
\tar{\xymatrix@!0{ & 0 \ar[rr] & & 1 \\
-1 \ar[ur]\ar[rr] & & 0 \ar[ur] &\\
& 1\ar'[r][rr]\ar'[u][uu] && 2\ar[uu]\\
0\ar[rr]\ar[ur]\ar[uu] && 1\ar[ur]\ar[uu] &}}{case 6}
$$

$$
\tar{\xymatrix@!R0{ & 0 \ar[rr] & & 1 \\
-1 \ar[ur]\ar[rr] & & 0 \ar[ur] &\\
& 1\ar'[r][rr]\ar'[u][uu] && 0\ar[uu]\\
0\ar[rr]\ar[ur]\ar[uu] && 1\ar[ur]\ar[uu] &}}{case 7}\quad
\tar{\xymatrix@!0{ & 0 \ar[rr] & & 1 \\
1 \ar[ur]\ar[rr] & & 0 \ar[ur] &\\
& -1\ar'[r][rr]\ar'[u][uu] && 0\ar[uu]\\
0\ar[rr]\ar[ur]\ar[uu] && -1\ar[ur]\ar[uu] &}}{case 8}\quad
\tar{\xymatrix@!0{ & 0 \ar[rr] & & 1 \\
-1 \ar[ur]\ar[rr] & & 0 \ar[ur] &\\
& -1\ar'[r][rr]\ar'[u][uu] && 0\ar[uu]\\
0\ar[rr]\ar[ur]\ar[uu] && -1\ar[ur]\ar[uu] &}}{case 9}
$$

$$
\tar{\xymatrix@!R0{ & 0 \ar[rr] & & -1 \\
1 \ar[ur]\ar[rr] & & 0 \ar[ur] &\\
& 1\ar'[r][rr]\ar'[u][uu] && 0\ar[uu]\\
0\ar[rr]\ar[ur]\ar[uu] && 1\ar[ur]\ar[uu] &}}{case 10}\quad
\tar{\xymatrix@!0{ & 0 \ar[rr] & & -1 \\
-1 \ar[ur]\ar[rr] & & 0 \ar[ur] &\\
& 1\ar'[r][rr]\ar'[u][uu] && 0\ar[uu]\\
0\ar[rr]\ar[ur]\ar[uu] && 1\ar[ur]\ar[uu] &}}{case 11}\quad
\tar{\xymatrix@!0{ & 0 \ar[rr] & & -1 \\
1 \ar[ur]\ar[rr] & & 0 \ar[ur] &\\
& -1\ar'[r][rr]\ar'[u][uu] && 0\ar[uu]\\
0\ar[rr]\ar[ur]\ar[uu] && -1\ar[ur]\ar[uu] &}}{case 12}
$$

$$
\tar{\xymatrix@!R0{ & 0 \ar[rr] & & -1 \\
1 \ar[ur]\ar[rr] & & 0 \ar[ur] &\\
& -1\ar'[r][rr]\ar'[u][uu] && -2\ar[uu]\\
0\ar[rr]\ar[ur]\ar[uu] && -1\ar[ur]\ar[uu] &}}{case 13}\quad
\tar{\xymatrix@!0{ & 0 \ar[rr] & & -1 \\
-1 \ar[ur]\ar[rr] & & 0 \ar[ur] &\\
& -1\ar'[r][rr]\ar'[u][uu] && 0\ar[uu]\\
0\ar[rr]\ar[ur]\ar[uu] && -1\ar[ur]\ar[uu] &}}{case 14}\quad
\tar{\xymatrix@!0{ & 0 \ar[rr] & & -1 \\
-1 \ar[ur]\ar[rr] & & 0 \ar[ur] &\\
& -1\ar'[r][rr]\ar'[u][uu] && -2\ar[uu]\\
0\ar[rr]\ar[ur]\ar[uu] && -1\ar[ur]\ar[uu] &}}{case 15}
$$

$$
\tar{\xymatrix@!R0{ & -2 \ar[rr] & & -1 \\
-1 \ar[ur]\ar[rr] & & -2 \ar[ur] &\\
& -1\ar'[r][rr]\ar'[u][uu] && 0\ar[uu]\\
0\ar[rr]\ar[ur]\ar[uu] && -1\ar[ur]\ar[uu] &}}{case 16}\quad
\tar{\xymatrix@!0{ & -2 \ar[rr] & & -1 \\
-1 \ar[ur]\ar[rr] & & -2 \ar[ur] &\\
& -1\ar'[r][rr]\ar'[u][uu] && -2\ar[uu]\\
0\ar[rr]\ar[ur]\ar[uu] && -1\ar[ur]\ar[uu] &}}{case 17}\quad
\tar{\xymatrix@!0{ & -2 \ar[rr] & & -3 \\
-1 \ar[ur]\ar[rr] & & -2 \ar[ur] &\\
& -1\ar'[r][rr]\ar'[u][uu] && -2\ar[uu]\\
0\ar[rr]\ar[ur]\ar[uu] && -1\ar[ur]\ar[uu] &}}{case 18}
$$

Cases 1,6,13,18. The types of 2-faces are determined by ranks of
states and their number can be counted explicitly. For example, in
the case 6 the cube contains 4 faces of type C and 2 faces of type
A.

Cases 2,5,7,10,14,15. These cases are not realizable because of
Lemma~\ref{lem_commun1}. One can not attribute the vertices $v_i,
v_j, v_k$ to inner and outer vertices so that the faces of type 4,
incident to the state $s$ or $s\oplus i\oplus j\oplus k$, have right
configuration.

Cases 3,8,11,16. The faces of the cube include 4 commutative faces
(cases 11, 16) or 2 A-faces and 2 C-faces (cases 3,8). The other two
faces are opposite to each other and have the same type because
there is a projection from one face to the other.

Cases 4,9,12,17. For example, let us consider the case 4. The cube
has one anticommutative face and 2 commutative faces. The other
three faces are incident to the state $s\oplus i$. Let us assume
that $x_i=x_j=x_k\in V(s\oplus i)$ and
$\sgn(v_i)=\sgn(v_j)=\sgn(v_k)=-1$. Then these three faces have
types A, Y and Y. We can see that if one changes the sign of any
vertex $v_i, v_j, v_k$ or the sign of any variable $x_i, x_j, x_k$
in $V(s\oplus i)$ then two faces of the three change their type. So
the number of A- and X-faces remains even.
\end{proof}

\begin{lem}
Any PU-oriented bipartite labeled graph $G$ has an edge assignment
of type X and one of type Y.
\end{lem}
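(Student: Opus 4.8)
The plan is to recast the existence of edge assignments as a problem in the cohomology of the solid $n$-cube $I^n=[0,1]^n$ (the geometric realization of the hypercube of states) with coefficients in $\Z_2$, identifying $\{\pm1\}$ with $\Z_2$ multiplicatively. Under this identification an edge assignment $\eps:\E\to\{\pm1\}$ is precisely a $1$-cochain $\eps\in C^1(I^n;\Z_2)$, and for any $2$-face $F$ the parity of the number of edges $e\subset F$ with $\eps(e)=-1$ equals the value of the coboundary $\delta\eps$ on $F$. Thus $F$ is even iff $\delta\eps(F)=0$ and odd iff $\delta\eps(F)=1$.

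Next I would encode the target conditions as a fixed $2$-cochain. Since every $2$-face of the hypercube is of exactly one of the types A, C, X, Y, I define $\tau_X\in C^2(I^n;\Z_2)$ by $\tau_X(F)=0$ on faces of type A and X and $\tau_X(F)=1$ on faces of type C and Y. By the definition of a type X edge assignment, $\eps$ is of type X precisely when $\delta\eps=\tau_X$; equivalently, a type X assignment exists if and only if $\tau_X$ lies in the image of $\delta:C^1\to C^2$, i.e. $\tau_X$ is a coboundary. The cochain $\tau_Y$ governing type Y assignments is defined symmetrically, with $\tau_Y(F)=0$ on faces of type A and Y and $\tau_Y(F)=1$ on faces of type C and X.

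The crucial point is that the obstruction to solving $\delta\eps=\tau_X$ vanishes. Since $I^n$ is contractible we have $H^2(I^n;\Z_2)=0$, so every $2$-cocycle is a coboundary, and it suffices to check that $\tau_X$ is a cocycle, i.e. $\delta\tau_X=0$. For a $3$-face (a $3$-subcube) $Q$ the value $\delta\tau_X(Q)=\sum_{F\subset Q}\tau_X(F)$ is, modulo $2$, the number of faces of type C or Y among the six $2$-faces of $Q$. Here I would invoke the preceding lemma: the number of type A and X faces of $Q$ is even, and since $Q$ has six faces in total, the number of type C and Y faces is even as well; hence $\delta\tau_X(Q)=0$ for every $Q$. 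Therefore $\tau_X$ is a coboundary and a type X edge assignment exists. Swapping the roles of X and Y and using the second assertion of the preceding lemma (each cube has an even number of type A and Y faces) gives $\delta\tau_Y=0$ and hence a type Y edge assignment.

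The only genuinely nontrivial input is the vanishing $\delta\tau_X=\delta\tau_Y=0$, which is exactly what the preceding lemma supplies; everything else is the standard translation between edge assignments and $1$-cochains together with the contractibility of $I^n$. The main thing to get right is the bookkeeping that "an even number of A and X faces" combined with the total count of six faces forces "an even number of C and Y faces", so that the parity condition imposed on each $3$-cube is automatically satisfied.
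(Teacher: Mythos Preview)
Your argument is correct and is precisely the cohomological argument of \cite[Lemma~1.2]{ORS}, which is exactly what the paper cites as its proof; you have simply spelled out the details (the identification of edge assignments with $1$-cochains, the target $2$-cochain $\tau_X$, and the use of the preceding lemma together with $H^2(I^n;\Z_2)=0$) that the paper leaves to the reference.
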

\begin{proof}
See~\cite[Lemma 1.2]{ORS}.
\end{proof}

Given a type X or type Y edge assignment $\eps$ we define the chain
complex
$$C(G)=\bigoplus\limits_{s\subset\V}\bigwedge\nolimits^*V(s)$$
with differential
$$\partial_\eps(u)=\sum\limits_{\{s,s'\subset\V\,|\, s\rightarrow s'=\eps\in\E\}}
\eps(e)\partial_{s'}^s(u).$$

This lemmas were proved in~\cite{ORS}.
\begin{lem}
If $\eps$ and $\eps'$ are two edge assignment of the same type (X or
Y) then the chain complex $(C(G),\partial_\eps)$ is isomorphic to
$(C(G),\partial_\eps')$.\hfill$\Box$
\end{lem}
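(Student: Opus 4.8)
The plan is to realize the passage from $\eps$ to $\eps'$ as a \emph{gauge transformation}, i.e. a rescaling of the identity map of $C(G)$ by a sign attached to each vertex of the hypercube of states. First I would form the difference cochain $\eta\colon\E\to\{\pm1\}$, $\eta(e)=\eps(e)\eps'(e)$, and establish its one essential property: the product of its values around the boundary of every $2$-face is $+1$. The point is that ``even'' versus ``odd'' in the definition of a type X (resp.\ type Y) edge assignment is prescribed purely in terms of the \emph{type} of the face --- A, C, X or Y --- and these types are intrinsic to the combinatorics of the states, not to the chosen assignment (the anticommutative/commutative dichotomy comes from Proposition~\ref{prop_commun_2face}, and the X/Y labelling of type-$4$ faces is fixed by the relations in $V(s\oplus i)$ via Lemmas~\ref{lem_commun1} and~\ref{lem_commun2}). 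Hence $\eps$ and $\eps'$, being of the same type, demand the \emph{same} parity of each individual face, so $\eta$ carries an even number of $(-1)$'s around every face and $\prod_{e\in\partial F}\eta(e)=+1$.

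Next I would interpret $\eta$ as a multiplicative $1$-cocycle on the cellular cochain complex of the cube $\{0,1\}^n$ with $\Z_2$-coefficients; the vanishing product around each $2$-cell is exactly the cocycle condition. Since the hypercube is contractible, $H^1(\{0,1\}^n,\Z_2)=0$, and every such cocycle is a coboundary. Concretely I would fix $\theta(\emptyset)=1$ and define $\theta\colon\{0,1\}^n\to\{\pm1\}$ by transporting the sign along any edge-path from $\emptyset$ to a given state $s$, multiplying by $\eta(e)$ at each step; the result is path-independent precisely because $\eta$ is a cocycle. This produces $\theta$ with $\eta(e)=\theta(s)\theta(s')$ for every edge $e\colon s\to s'$.

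Finally I would let $\Phi\colon C(G)\to C(G)$ act on the summand $\bigwedge^*V(s)$ as multiplication by $\theta(s)$. Each $\theta(s)=\pm1$ is invertible, so $\Phi$ is an isomorphism of $\Z$-modules. To check it is a chain map from $(C(G),\partial_\eps)$ to $(C(G),\partial_{\eps'})$ it suffices to compare, edge by edge for $u\in\bigwedge^*V(s)$, the coefficient $\theta(s)\,\eps'(e)$ of $\partial^s_{s'}(u)$ appearing in $\partial_{\eps'}\Phi(u)$ with the coefficient $\eps(e)\,\theta(s')$ appearing in $\Phi\partial_\eps(u)$. These agree since $\theta(s)\theta(s')=\eta(e)=\eps(e)\eps'(e)$ forces $\theta(s)\eps'(e)=\eps(e)\theta(s')$. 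Summing over all edges out of $s$ gives $\partial_{\eps'}\circ\Phi=\Phi\circ\partial_\eps$, so $\Phi$ is the required isomorphism of chain complexes.

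The one genuinely delicate point is the cocycle claim of the first paragraph: one must be certain that the even/odd requirement is imposed identically by $\eps$ and $\eps'$ on \emph{every} face, including the zero faces of types X and Y, which hinges on the face types being intrinsic rather than assignment-dependent. Granting that --- which is the content of the earlier face-classification lemmas --- the rest is the standard fact that a sign cocycle on a contractible complex is a coboundary, together with the routine verification that the induced gauge transformation intertwines the two differentials.
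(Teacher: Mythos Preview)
Your argument is correct and is precisely the standard gauge-transformation proof from \cite{ORS}, which is exactly what the paper invokes (the lemma is stated with a $\Box$ and the surrounding text refers the reader to \cite{ORS} rather than giving an independent proof). The only point worth underscoring, which you already flag, is that the face types A, C, X, Y are determined by the combinatorics of the states and the relations in the modules $V(s')$, independently of any edge assignment, so two assignments of the same type impose identical parity constraints and the difference cochain is indeed a cocycle.
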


\begin{lem}
If $\eps$ and $\eps$ are two edge assignment of opposite types then
there is isomorphism
$(C(G),\partial_\eps)\cong(C(G),\partial_\eps')$.\hfill$\Box$
\end{lem}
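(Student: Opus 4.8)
The plan is to treat an edge assignment as a $1$-cochain on the hypercube of states and to reduce the statement, by a gauge transformation, to the already–established case of two assignments of the \emph{same} type. Write $\delta=\eps'/\eps\colon\E\to\{\pm1\}$. Any isomorphism $\Phi=\bigoplus_{s\subset\V}\Phi_s$ respecting the decomposition $C(G)=\bigoplus_s\bigwedge^*V(s)$ intertwines the two differentials precisely when, for every edge $e\colon s\to s'$,
\[
\Phi_{s'}\circ\partial^s_{s'}=\delta(e)\,\partial^s_{s'}\circ\Phi_s .
\]
If all $\Phi_s$ are scalars $\eta(s)\in\{\pm1\}$ this becomes the coboundary equation $\delta=d\eta$, solvable as soon as $\delta$ is a $1$-cocycle. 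This is exactly the mechanism of the same-type lemma, which I would record first: for two assignments of one type the prescribed parities agree on every face, so $d\delta=0$ and $\delta$ is a coboundary on the acyclic cube.

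First I would compute $d\delta$ in the opposite-type situation. By the definition of the two edge assignments, faces of type A and of type C receive the same prescribed parity under $\eps$ and under $\eps'$, whereas the zero faces swap: an X-face is even for $\eps$ but odd for $\eps'$, and a Y-face is odd for $\eps$ but even for $\eps'$. Hence $d\delta$ is trivial on the A- and C-faces and nontrivial (equal to $-1$) on every X- and Y-face; that is, $d\delta$ is the indicator of the type-$4$ (zero) faces. In particular $\delta$ is \emph{not} a cocycle, so no diagonal $\Phi$ can succeed. Crucially, the obstruction is supported exactly on the faces where, by the Remark, both composites around the square already vanish ($x_i=x_j=0\in V(s\oplus i\oplus j)$), so it is ``harmless'' for $\partial^2=0$ while still blocking a scalar chain isomorphism.

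The way around this is to allow the $\Phi_s$ to be genuine algebra automorphisms of $\bigwedge^*V(s)$ rather than scalars. I would build each $\Phi_s$ from sign changes on suitable generators of $V(s)$, guided by Lemmas~\ref{lem_commun1} and~\ref{lem_commun2}: on a type-$4$ face the intermediate module carries the relation $x_i=\pm\sgn(v_j)x_j$, and negating the relevant generator converts a type-X relation into a type-Y one while affecting the edge maps $\wedge x_i$ only by a controlled sign. The aim is to arrange the family $\{\Phi_s\}$ so that conjugating $\partial_\eps$ by $\Phi=\bigoplus_s\Phi_s$ flips the induced parity precisely on the X- and Y-faces, i.e. produces the differential of a \emph{type-Y} edge assignment. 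The same-type lemma then identifies that conjugated complex with $(C(G),\partial_{\eps'})$, and composing the two steps yields the desired isomorphism.

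The main obstacle is the \emph{global coherence} of the automorphisms $\Phi_s$: the generator sign flips are chosen locally at each state, and one must check that they patch consistently along every edge and around every $2$- and $3$-face, leaving no residual discrepancy on the A- and C-faces. This is exactly where the preceding cube lemma enters — the assertion that each $3$-cube contains an even number of A/X faces (and of A/Y faces) is the vanishing of the relevant $\Z_2$-obstruction, and it is what guarantees that the local choices assemble into a single $\Phi$. Once coherence is secured, verifying the intertwining relation on each of the five face types is the routine case analysis already carried out in Proposition~\ref{prop_commun_2face}, so I expect no further difficulty there; the argument then runs parallel to that of~\cite{ORS}.
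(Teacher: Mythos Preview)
The paper gives no argument here at all: the sentence before the two lemmas reads ``This lemmas were proved in~\cite{ORS}'', and the $\Box$ simply signals that the reference is the proof. So there is nothing in the paper to compare your sketch against beyond that citation; your outline already goes well beyond the paper's treatment and, in its closing line, lands on the same reference.

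That said, your sketch has a genuine gap at the crucial step. You propose to build each $\Phi_s$ from ``sign changes on suitable generators of $V(s)$'', using the relation $x_i=\pm\sgn(v_j)x_j$ that holds in the intermediate module of a type-$4$ face. But $V(s)$ is a \emph{quotient} of $\Z\langle x_1,\dots,x_n\rangle$ by the relations $r^s_k$, and negating a single $x_i$ does not in general preserve the relation submodule: the generator $x_i$ occurs with nonzero coefficient in many $r^s_k$, not only in the one relation you want to flip. So ``negate the relevant generator'' is not, as written, a well-defined automorphism of $V(s)$ (and hence not of $\bigwedge^*V(s)$). If instead you mean to replace the edge map $u\mapsto x_i\wedge u$ by $u\mapsto(-x_i)\wedge u$, that is just a sign on the edge, i.e.\ a diagonal move---precisely the mechanism you correctly argued cannot cancel the obstruction $d\delta$ supported on the zero faces.

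Your diagnosis of the obstruction is correct (the ratio $\delta=\eps'/\eps$ has $d\delta$ supported exactly on the type-$4$ faces, and a scalar gauge cannot fix it), and the idea that one needs honest, non-scalar automorphisms is the right one. What is missing is an explicit, globally defined family $\{\Phi_s\}$ that (i) descends to each $V(s)$, (ii) commutes with the projection edges, and (iii) anticommutes with exactly the right multiplication edges so that the induced face parities swap X$\leftrightarrow$Y while leaving A and C intact. The cube lemma you invoke is indeed the cohomological obstruction check that makes such a family \emph{possible}, but it does not by itself \emph{produce} the $\Phi_s$; you still owe the construction. Since the paper itself defers entirely to~\cite{ORS}, the honest completion of your proposal is to carry the ORS construction over to the present setting and verify it survives the passage from circle configurations to the modules $V(s)$.
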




\begin{defi}
Homology $\Kh(G)$ of the complex $(C(G),\partial)$ is called {\em
reduced odd Khovanov homology} of the labeled simple graph $G$.
\end{defi}

The main theorem of the article states that odd Khovanov homology is
in fact an invariant of PU-orientable graph-links.

\begin{theor}
Khovanov homology $\Kh(G)$ is invariant under $R, \Omega_1, \ofo,
\Omega_3, \Omega_4$ moves.
\end{theor}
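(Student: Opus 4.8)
The plan is to prove invariance one move at a time, in each case producing a chain homotopy equivalence between $C(G)$ and $C(G')$, where $G'$ is the result of the move. Since the earlier lemmas show that $\Kh(G)$ does not depend on the choice of edge assignment, it suffices to fix a convenient type X (or type Y) assignment on each side and to compare. The single algebraic tool underlying every case is the cancellation (Gaussian elimination) lemma for complexes of free $\Z$-modules: if the differential has a component $\phi\colon X\to Y$ between two summands which is an isomorphism, then the total complex is homotopy equivalent to the one obtained by deleting $X,Y$ and correcting the remaining differential. The remark classifying $\partial^s_{s\oplus i}$ as either an epimorphism with kernel $x_i\wedge\bigwedge^*V(s)$ or an isomorphism onto $x_i\wedge\bigwedge^*V(s\oplus i)$ tells us exactly where cancellable components sit: precisely along the edges with $\rank V(s\oplus i)=\rank V(s)+1$.

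First I would dispatch the two elementary moves. For $R$ at a vertex $v$, the relation matrix $\ol A$ changes only by the sign of the row and column of $v$, so the assignment $x_v\mapsto -x_v$ (and $x_j\mapsto x_j$ otherwise) carries the relations $r^s_i$ of $G$ to those of $G'$ for every state $s$ and commutes with all edge maps; this is already an isomorphism of complexes. For $\Omega_1$ adding an isolated vertex $v_i$, the generator $x_i$ is free in every state containing $v_i$ and is $0$ in every state not containing it; hence one full coordinate direction of the hypercube consists entirely of isomorphism-edges, and cancelling all of them identifies $C(G')$ with $C(G)$ up to grading shift.

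Next come $\ofo$ and $\Omega_4$, which preserve PU-orientation (for $\Omega_4$ by Proposition~\ref{faith_invar}, for $\ofo$ by the definition of the move). For $\ofo$ the two added vertices $u,v$ are nonadjacent, carry opposite signs and share a neighbourhood; restricting to the two-dimensional subcube in the directions $u,v$ over each state of the remaining graph, the equality of neighbourhoods forces a rank pattern producing an acyclic sub/quotient complex, which I cancel to recover $C(G\setminus\{u,v\})$. For $\Omega_4$ the key is the pivot description extracted in the proof of Proposition~\ref{faith_invar}: passing from $B(G)$ to $B(G')=\bega{cc} -1 & c\\ d & \wt B_1\ena$ is a row/column operation that preserves the corank of every submatrix, whence $\rank V(s)$ is unchanged for all $s$ and one can build a state-by-state isomorphism of the modules $V(s)$ respecting the relations. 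Here Lemmas~\ref{lem_commun1} and~\ref{lem_commun2}, together with the type X/Y classification of $2$-faces, guarantee that this isomorphism matches the edge-assignment types on the two sides, so it descends to a chain isomorphism.

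The main obstacle is $\Omega_3$. Unlike the other moves it is neither a single pivot nor a purely local cancellation: it simultaneously disconnects $u$ from $\{v,w\}$, re-routes its adjacency through the (signed) symmetric difference of $N(v)$ and $N(w)$, and flips the labels of $v$ and $w$. My strategy would be to first record, exactly as in the corank computation of Proposition~\ref{faith_invar}, the explicit dictionary between the determinants---hence the coranks, hence the modules $V(s)$---of $G$ and $G'$ across all states involving $u,v,w$; this dictionary is what pairs up the summands of $C(G)$ and $C(G')$. The delicate part is then to verify that the pairing is compatible with the differentials and with a single consistent edge assignment, i.e.\ that the induced degreewise isomorphism is in fact a chain map. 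I expect to handle this by reducing $\Omega_3$, where possible, to a composite of the already-verified $\ofo$ and $\Omega_4$ moves (in the spirit of the first Proposition of the paper, which realizes reorientations through $\Omega_2$ and $\Omega_4$), leaving only a bounded list of local sign configurations to be checked directly.
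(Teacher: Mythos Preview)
Your treatment of $R$, $\Omega_1$ and $\ofo$ is essentially what the paper does: a direct module isomorphism for $R$, a one-direction cancellation (equivalently, a tensor factorisation by an acyclic two-term complex) for $\Omega_1$, and an acyclic sub/quotient argument along the $\{u,v\}$-subcube for $\ofo$. No objection there.

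The $\Omega_4$ argument has a genuine error. It is not true that $\rank V(s)$ is unchanged for every state $s$. Take vertices $t\in N(u)$, $w\in N(v)$ that are not adjacent in $G$; for $s=\{t,w\}$ one has $A(G(s))=0$ but $A(\wt G(s))$ is a rank-two skew matrix, so the coranks differ. The pivot description from Proposition~\ref{faith_invar} only tells you that minors stay in $\{0,\pm1\}$, not that individual coranks are preserved. What the paper actually does is introduce a \emph{state bijection} $\phi$ that swaps $s\leftrightarrow s\cup\{u,v\}$ when $\{u,v\}\cap s=\emptyset$ or $\{u,v\}$, and couples it with the generator swap $x_p\leftrightarrow x_q$; it is $V_G(s)\cong V_{\wt G}(\phi(s))$ that holds, and one then checks this respects edge maps and face types. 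Your ``state-by-state isomorphism'' cannot work without this twist.

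For $\Omega_3$ your plan is too optimistic. There is no reason to expect $\Omega_3$ to factor through $\ofo$ and $\Omega_4$ inside the class of PU-oriented bipartite graphs, and the paper does not attempt this. Instead it writes both $C(G)$ and $C(\wt G)$ as cubes over the $\{u,v,w\}$-directions, kills an acyclic quotient $C\to x_1C_u$ on one side (and the analogous piece on the other) using the linear functional $f(\sum\lambda_i x_i)=\lambda_1$, then collapses the resulting isomorphic edges $X\to C_{uv}$, $X\to C_{uw}$. What remains on each side is a five-term complex, and the identification between them is made by an \emph{explicit table} of generator maps (e.g.\ $V_{uvw}\to\wt V_u$ sends $x_1\mapsto -x_2$, $x_2\mapsto x_1$, $x_3\mapsto -x_1$), after which one verifies by hand that the induced edge assignment on $C(\wt G)$ is again of type~X. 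The ``dictionary of coranks'' you extract from Proposition~\ref{faith_invar} is the right bookkeeping to know which modules match, but it does not by itself produce these module isomorphisms or the edge-assignment compatibility; that is exactly the work you are deferring to an unspecified ``bounded list of local sign configurations''.
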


\begin{proof}
Let $G$ be a labeled graph and $\wt G$ be a graph obtained from $G$
by some Reidemeister move $R, \Omega_1, \ofo, \Omega_3, \Omega_4$.

\medskip
\noindent{\em Invariance under $R$.}

See~\cite[Lemma 2.3]{ORS}.

\medskip
\noindent{\em Invariance under $\Omega_1$.}

Let $\widetilde G$ be obtained from $G$ by addition an isolated
labeled vertex $v$. Then its adjacency matrix $A(\wt G)$ looks like
$\bega{cc} 0 & 0\\ 0& A(G)\ena$. The complex $C(\wt G)$ splits as a
$\Z$-module into the sum $C\oplus C_v$ where $C$ corresponds to
states $s\in\V(\wt G)$ such that $v\not\in s$ and $C_v$ corresponds
to states $s\in\V(\wt G)$ such that $v\in s$. There is a natural
bijecticton between states of $C(G)$, $C$ and $C_v$. Let $\eps$ be
an edge assignment on $C(G)$. We define an edge assignment $\wt\eps$
on $C(\wt G)$ as follows.

We set $\wt\eps=\eps$ on $C_v$ and $\wt\eps(e)=1$ for all edges
between $C$ and $C_v$. If $\sgn(v)=1$ we set $\wt\eps=-\eps$ on $C$.
If $\sgn(v)=-1$ we set $\wt\eps=\delta\cdot\eps$ on $C$ where
$\delta(s\to s\oplus i)=1$ if $\rank V(s\oplus i)=\rank V(s)+1$ and
$\delta(s\to s\oplus i)=-1$ otherwise.

Then $\wt\eps$ is an edge assignment of the same type as $\eps$
because all squares with edges connecting $C$ and $C_v$ are of type
C or A and the parity of other squares is the same as in $C(G)$.

The complex $(C(\wt G),\partial_{\wt\eps})$ is isomorphic to product
of complexes $(C(G),\partial_\eps)\otimes C(v)$ where the complex
$C(v)$ is equal to
$$\xymatrix{\Z_2 \ar[r]^-{x\wedge} & \bigwedge^*\Z_2\langle
x\rangle}$$ if $\sgn(v)=-1$ and
$$\xymatrix{\bigwedge^*\Z_2\langle x\rangle \ar[r]^-{x=0} & \Z_2}$$ if
$\sgn(v)=1$. In any case $H_*(C(v))=\Z_2\cdot 1$, where $1\in
H_0(C(v))$ if $\sgn(v)=1$ and $1\in H_1(C(v))$ if $\sgn(v)=-1$.
Thus, we have
$$ \Kh(\wt G) = \Kh(G)\otimes \Kh(v)\cong\Kh(G).$$

\noindent{\em Invariance under $\Omega_2$.}

Assume that we add vertices $v$ and $w$ to get the graph $\wt G$ by
$\Omega_2$ and $\sgn(v)=1$, $\sgn(w)=-1$ . We can write the
adjacency matrix $A(\wt G)$ in the form
$$
\bega{ccc}  0&0& a \\
0&0& a\\
-a^\top&-a^\top&A(G)\ena.
$$
For every state $s\in \V(G)$ the following equations $\cor
A(G(s))=\cor A(\wt G(s))$, $\cor A(G(s\cup\{v\}))=\cor
A(G(s\cup\{w\}))=\cor A(G(s\cup\{v,w\}))-1$.  These equalities
defines the type of the upper and left arrows of the complex $C(\wt
G)$ written in the form
$$
\xymatrix{C_{vw}\ar[r]^{1} & C_w\\
C_v\ar[u]^{x_2}\ar[r]_\partial & C.\ar[u]_\partial}
$$
Here $C_v$ is consists of chains whose state contains $v$ and does
not contain $w$, $C, C_w, C_{vw}$ are defined analogously. Notice
that the constriction of the edge assignment $\wt \eps$ of the
complex $C(\wt G)$ to $C$ is an edge assignment of the same type as
$\wt\eps$.

For any state $s$ in $C_{vw}$ define linear function $f : V(s)\to
\Z_2$ by formula $f(\sum_i\lambda_ix_i)=\lambda_1+\lambda_2$.
Function $f$ is well-defined because it vanishes any relation:
$f(r^s_i)=\ol a_{i1}+\ol a_{i2}=0$ since $a_{i1}=a_{i2}$ and
$\sgn(v_i)=-\sgn(v_j)$. Then $\bigwedge^*V(s)=\bigwedge^*\ker
f\oplus x_2\bigwedge^*V(s)$ and $C_{vw}=X\oplus x_2C_{vw}$. One can
check that subcomplex $X\rightarrow C_{w}$ is acyclic. Then homology
of $C(\wt G)$ coincides with the homology of the quotient complex
$$
\xymatrix{x_2C_{vw} & \\
C_v\ar[u]^{x_2}\ar[r]_\partial & C.}
$$
The quotient of this complex by subcomplex $C$ appears to be acyclic
too. Thus $C(\wt G)$ has the same homology as $C=C(G)$.

\medskip
\noindent{\em Invariance under $\Omega_3$.}

Let the vertices $u, v, w$ in the third Reidemeister move has the
numbers $1, 2, 3$ in the $\V(G)=\V(\wt G)$. Then the adjacency
matrices of $G$ and $\wt G$ looks like
$$
A(G)=\bega{cccc} 0&1&1&0\\
-1&0&0&a\\
-1&0&0&b\\
0^\top&-a^\top&-b^\top&D\ena,\quad
A(\wt G)=\bega{cccc} 0&0&0&a-b\\
0&0&0&a\\
0&0&0&b\\
(b-a)^\top&-a^\top&-b^\top&D\ena.
$$
The correspondent relation matrices are
$$
\ol A(G)=\bega{cccc} 0&1&1&0\\
-1&0&0&\ol a\\
-1&0&0&\ol b\\
0^\top&-a^\top&-b^\top&\ol D\ena,\quad
\ol A(\wt G)=\bega{cccc} 0&0&0&\ol{a-b}\\
0&0&0&\ol a\\
0&0&0&\ol b\\
(b-a)^\top&a^\top&b^\top&\ol D\ena.
$$

Denote $\wt V(s)=V(\wt G(s))$. Then for any $s\subset
\V(G)\setminus\{u,v,w\}$ we have $V(s)\cong \wt V(s)$, $V(s\oplus
v)\cong \wt V(s\oplus v)$, $V(s\oplus w)\cong \wt V(s\oplus w)$,
$V(s\oplus v\oplus w)\cong \wt V(s\oplus u\oplus v)\cong \wt
V(s\oplus u\oplus w)$, $V(s\oplus u\oplus v\oplus w)\cong \wt
V(s\oplus u)$, $V(s\oplus u\oplus v)\cong V(s\oplus u\oplus w)\cong
V(s)$ and the correspondent isomorphisms of the exterior algebras
compatible with the differential.

Consider complexes $C(G)$ and $C(\wt G)$ in the form of cube:
$$
\xymatrix@!0{ & C_{uw} \ar[rr] & & C_{uvw} \\
C_u \ar[ur]^1\ar[rr]^(0.65)1 & & C_{uv}\ar[ur] &\\
& C_w\ar'[r][rr]\ar'[u][uu] && C_{vw}\ar[uu]\\
C\ar[rr]\ar[ur]\ar[uu]^{x_1} && C_v\ar[ur]\ar[uu] &} \qquad
\xymatrix@!0{ & \wt C_{uv} \ar[rr] & & \wt C_u \\
\wt C_{uvw} \ar[ur]^1\ar[rr]^(0.65)1 & & \wt C_{uw}\ar[ur] &\\
& \wt C_v\ar'[r][rr]\ar'[u][uu] && \wt C\ar[uu]\\
\wt C_{vw}\ar[rr]\ar[ur]\ar[uu]^{x_1} && \wt C_w\ar[ur]\ar[uu] &}
$$

For any state $s$ in $C_{u}$ define a linear function $f : V(s)\to
\Z_2$ by the formula $f(\sum_i\lambda_ix_i)=\lambda_1$. The function
is well-defined and there are decompositions
$\bigwedge^*V(s)=\bigwedge^*\ker f\oplus x_1\bigwedge^*V(s)$ and
$C_{u}=X\oplus x_2C_{u}$. Consider the following subcomplex
$$
\xymatrix@!0{ & C_{uw} \ar[rr] & & C_{uvw} \\
X \ar[ur]^1\ar[rr]^(0.65)1 & & C_{uv}\ar[ur] &\\
& C_w\ar'[r][rr]\ar'[u][uu] && C_{vw}\ar[uu]\\
 && C_v\ar[ur]\ar[uu] &}
$$
Factor complex $C\to x_2C_{u}$ is acyclic so homology of $C(G)$ is
isomorphic to homology of the subcomplex. This subcomplex contains
acyclic subcomplex $X\to \partial(X)$. The maps $X\to C_{uv}$ and
$X\to C_{uw}$ are isomorphisms, so after factorization we get the
complex
$$
\xymatrix@!0{ & C_{uw} \ar[rr]\ar@{=}[dr] & & C_{uvw} \\
& & C_{uv}\ar[ur] &\\
& C_w\ar'[r][rr]\ar[uu] && C_{vw}\ar[uu]\\
 && C_v\ar[ur]\ar[uu] &}
$$

Analogous reasonings reduce $C(\wt G)$ to the complex (in $\wt
C_{uvw}$ we should define the function $f : V(s)\to \Z_2$ by the
formula $f(\sum_i\lambda_ix_i)=\lambda_1+\lambda_2-\lambda_3$)
$$
\xymatrix@!0{ & \wt C_{uv} \ar[rr]\ar@{=}[dr] & & \wt C_{u} \\
& & \wt C_{uw}\ar[ur] &\\
& \wt C_v\ar'[r][rr]\ar[uu] && C\ar[uu]\\
 && \wt C_w\ar[ur]\ar[uu] &}
$$
We show now that these complexes are isomorphic.

Let $\eps$ be an edge assignment of type X on $C(G)$ . We denote
$\eps_\emptyset, \eps_u, \eps_{uv} $ etc. the restriction of $\eps$
to the edges of the spaces $C, C_u, C_{uv}$ etc. The restriction of
the edge assignment $\eps$ to the edges between cubes $C_u$ and
$C_{uv}$ will be denoted as $\eps_u(v)$, for edges between other
cubes we use similar notation.

As in~\cite{ORS} we can assume that $\eps_{uv}=\eps{uw}$.

We need explicit expression for the isomorphisms $V(s\oplus v\oplus
w)\cong \wt V(s\oplus u\oplus v)\cong \wt V(s\oplus u\oplus w)$ etc.
mentioned above. We define these isomorphisms on the generators as
shown in the table

\begin{equation}\label{isom_V}
\begin{array}{|c|c|c|c|}
\hline
& x_1 & x_2 & x_3\\
\hline
V_{uv}\to V & -x_2 & 0 & x_3-x_2\\
V_{uw}\to V & -x_3 & x_2-x_3 & 0\\
V_{uvw}\to \wt V_u & -x_2=-x_3 & x_1 & -x_1\\
V_{vw}\to \wt V_{uv} & x_2 & x_1-x_2 & -x_1\\
V_{vw}\to \wt V_{uw} & x_3 & x_1 & -x_1-x_3\\
\hline
\end{array}
\end{equation}
Any other generator $x_i,\ i\ge 4,$ goes to itself: $x_i\mapsto
x_i$.

These maps induce isomorphisms $\phi^{uv} : C_{uv}\to C$, $\phi^{uw}
: C_{uw}\to C$, $\phi^{uvw} : C_{uvw}\to \wt C_u$, $\phi^{vw}_{uv} :
C_{vw}\to \wt C_{uv}$, $\phi^{vw}_{uw} : C_{vw}\simeq \wt C_{uw}$.
Below for identification the cube $C_{uw}$ with $C$ we will use the
modified isomorphism $\phi^{uw}_+=\delta\phi^{uw}$ where
$\delta=-\eps_u(v)\eps_u(w)$. Then we choose the edge assignment in
the image to extend the isomorphisms of spaces to chain maps of
squares
\begin{gather*}
\vcenter{\xymatrix@!{C_{uv}\ar[r]^{\eps_{uv}(w)} & C_{uvw}\\
C_{v}\ar[u]^{\eps_v(u)}\ar[r]_{\eps_v(w)} &
C_{vw}\ar[u]_{\eps_{vw}(u)}}}
\quad\longrightarrow\quad \vcenter{\xymatrix@!{C\ar[r]^{\wt\lambda_1\eps_{uv}(w)} &\wt C_{u}\\
C_{v}\ar[u]^{\wt\lambda_2\eps_v(u)}\ar[r]_{\wt\lambda_1\eps_v(w)} &\wt C_{uv}\ar[u]_{\wt\lambda_2\eps_{vw}(u)}}} \\
\vcenter{\xymatrix@!{C_{uw}\ar[r]^{\eps_{uw}(v)} & C_{uvw}\\
C_{w}\ar[u]^{\eps_w(u)}\ar[r]_{\eps_w(v)} &
C_{vw}\ar[u]_{\eps_{vw}(u)}}} \quad\longrightarrow\quad
\vcenter{\xymatrix@!{C\ar[r]^{\delta\eps_{uw}(v)} &\wt C_{u}\\
C_{w}\ar[u]^{\wt\lambda_3\delta\eps_w(u)}\ar[r]_{\eps_w(v)} &\wt
C_{uw}\ar[u]_{\wt\lambda_3\eps_{vw}(u)}}}
\end{gather*}
The induced edge assignment $\wt\eps$ is indicated on the edges of
the diagrams. The sign $\wt\lambda_i,\ i=1,2,3,$ is equal to $-1$ on
edges $s\to s\oplus i$ in $C(\wt G)$, such that $\rank V(s\oplus
i)=\rank V(s)+1$, and equal to $1$ otherwise.

The maps $C_{uv}\to C_{uvw}$ and $C_{uw}\to C_{uvw}$ induce the same
maps $C\to \wt C_{u}$. Indeed, the first square gives the map
$\wt\eps_\emptyset(u)\partial^s_{s\oplus
1}=\eps_{uv}(w)\phi^{uvw}\partial^{s\oplus 1\oplus 2}_{s\oplus
1\oplus 2\oplus 3}(\phi^{uv})^{-1}$ whereas the second gives
$\eps_{uw}(v)\phi^{uvw}\partial^{s\oplus 1\oplus 3}_{s\oplus 1\oplus
2\oplus 3}(\phi^{uw}_+)^{-1}$. The identification map
$(\phi^{uw}_+)^{-1}\phi^{uv}$ between spaces $C_{uv}$ and $C_{uw}$
is equal to $\delta \partial^{s\oplus 1}_{s\oplus 1\oplus
3}(\partial^{s\oplus 1}_{s\oplus 1\oplus 2})^{-1}$. Then the two
induced maps coincide if
$$\eps_{uv}(w)\partial^{s\oplus 1\oplus 2}_{s\oplus
1\oplus 2\oplus 3}\partial^{s\oplus 1}_{s\oplus 1\oplus
2}=\delta\eps_{uw}(v)\partial^{s\oplus 1\oplus 3}_{s\oplus 1\oplus
2\oplus 3}\partial^{s\oplus 1}_{s\oplus 1\oplus 3}.
$$
But $\delta=-\eps_u(v)\eps_u(w)$ and
$$\eps_{uv}(w)\eps_u(v)\phi^{uvw}\partial^{s\oplus 1\oplus 2}_{s\oplus
1\oplus 2\oplus 3}\partial^{s\oplus 1}_{s\oplus 1\oplus
2}=-\eps_{uw}(v)\eps_u(w)\phi^{uvw}\partial^{s\oplus 1\oplus
3}_{s\oplus 1\oplus 2\oplus 3}\partial^{s\oplus 1}_{s\oplus 1\oplus
3}$$ because any 2-face in $C(G)$ equipped with an edge assignment
anticommmutes.

The first isomorphism of squares keeps the types of 2-faces and does
not change the parity with respect to the edge assignments. Then the
induced edge assignment has type X on the square
$$
\xymatrix@!{C\ar[r] &\wt C_{u}\\
C_{v}\ar[u]\ar[r] &\wt C_{uv}\ar[u]}
$$

The second isomorphism keeps the parity of 2-faces of types 1,2,3,
change the parity for types 4 and 5 because in this case there exist
a unique edge of the square with $\wt\lambda_1=-1$. On the other
hand, the isomorphism interchange squares A and C of type 5 since
the relation $x_1=x_2\in V_{uvw}$ is equivalent to $-x_3=x_1\in \wt
V_u$. X-squares become Y-squares and vice versa because
$x_2=\sgn(v_1)x_1 (=-x_1)\in V_{vw}$ turns into
$x_1=-x_3=-\sgn(v_3)x_3\in\wt V_{uw}$. Thus, $\wt\eps$ again appears
to be of type X on the square
$$
\xymatrix@!{C\ar[r] &\wt C_{u}\\
C_{w}\ar[u]\ar[r] &\wt C_{uw}\ar[u]}
$$
We can extend $\wt\eps$ to an edge assignment of type X on the whole
complex $C(\wt G)$ (it is possible because the quotient space
obtained after one collapses the faces where $\wt\eps$ is defined to
a point has vanishing cohomology group $H^2$). Then contraction of
the complex $C(\wt G)$ with the chosen edge assignment yields the
complex isomorphic to the corresponding complex obtained from
$C(G)$.

\medskip
\noindent{\em Invariance under $\Omega_4$.}

Let the vertices $u$ and $v$ of the move $\Omega_4$ has numbers $p$
and $q$ in $V(G)=V(\wt G)$. The coefficients of adjacency matrices
of $A(G)=(a_{ij})$ and $A(\wt G)=(\wt a_{ij})$ are connected by the
formula
$$
\wt a_{ij} = \left\{\begin{array}{cl}
a_{ij}-a_{pq}a_{ip}a_{jq}+a_{pq}a_{iq}a_{jp},& \{i,j\}\cap \{p,q\}=\emptyset,\\
a_{ij}, & \{i,j\}\cap \{p,q\}\ne\emptyset,\{p,q\},\\
-a_{ij}, & \{i,j\}= \{p,q\}.
\end{array}\right.
$$

Consider the map $\phi$ acting on the states by the formula
$$
\phi(s)=\left\{\begin{array}{cl} s\cup\{u,v\},& \{u,v\}\cap
s=\emptyset, \\
s\setminus \{u,v\}, & \{u,v\}\cap s=\{u,v\},\\
s, & \{u,v\}\cap s\ne\emptyset, \{u,v\}\\
\end{array}\right.
$$
with the linear maps $\Phi : V(s)\to V(\phi(s))$ defined by the
formula

$$
\Phi(x_i)=\left\{\begin{array}{cl} x_i,& i\ne p,q, \\
x_q, & i=p,\\
x_p, & i=q.\\
\end{array}\right.
$$

Then the map $\Phi$ is well-defined and after natural extension to
homomorphisms of external algebras it determines map $\Phi:C(G)\to
C(\wt G)$. The map $\Phi$ does not change the types of 2-faces. If
we choose the edge assignment $\eps$ and $\wt\eps$ on the spaces
$C(G)$ and $C(\wt G)$ respectively, such that $\eps =
\phi^*(\wt\eps)$ then $\Phi$ appears to be a chain map. Thus the
complexes $(C(G),\partial_\eps)$ and $(C(\wt G),\partial_{\wt\eps})$
are isomorphic as well as their homology.
\end{proof}

\begin{coro}
Odd Khovanov homology $\Kh(G)$ is an invariant of  bipartite
PU-graph-links.\hfill $\Box$
\end{coro}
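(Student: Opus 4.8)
The plan is to read the corollary off directly from the main Theorem together with the identification of PU-oriented graph-links with PU-orientable ones established at the end of Section~2. Recall that, by definition, a bipartite PU-graph-link is the class of a bipartite graph admitting a PU-orientation, while the homology $\Kh(G)$ is computed from a chosen PU-orientation of a representative $G$. Thus two independent facts have to be verified: that $\Kh(G)$ is unchanged when we pass between Reidemeister-equivalent representatives, and that it is unchanged when we replace the auxiliary PU-orientation by another one on the same graph.

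For the first point I would simply quote the main Theorem, which asserts invariance of $\Kh(G)$ under each of the moves $R,\Omega_1,\ofo,\Omega_3,\Omega_4$ --- precisely the moves generating the equivalence relation that defines PU-oriented graph-links. Hence $\Kh$ is constant along any finite chain of such moves and descends to a well-defined function on PU-oriented graph-links. For the second point I would invoke the earlier corollary stating that any two PU-orientations of a fixed bipartite graph coincide up to reversions $R$. Since $R$-invariance is itself part of the Theorem, the complexes built from two such orientations have isomorphic homology, so $\Kh(G)$ does not depend on the orientation used to compute it.

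The only step deserving a word of care --- and the nearest thing here to an obstacle --- is that every representative met along a sequence of moves must itself remain PU-oriented, so that each $V(s)$ is free and the complex $C(G)$ is actually defined at every stage. This is exactly why the second Reidemeister move is taken in its principally unimodular form $\ofo$, and why Proposition~\ref{faith_invar} is needed to guarantee that the remaining four moves preserve PU-orientedness. Granting this, the two invariance statements combine: $\Kh(G)$ depends neither on the representative nor on its PU-orientation, hence only on the bipartite PU-graph-link represented by $G$, which is the assertion of the corollary.
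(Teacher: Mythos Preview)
Your argument is correct and matches the paper's intent: the paper gives no proof beyond the $\Box$, signalling that the corollary is immediate from the main Theorem together with the identification of PU-oriented and PU-orientable graph-links established after Corollary~1. Your additional remark about why intermediate graphs stay PU-oriented (via Proposition~\ref{faith_invar} and the use of $\ofo$) is a helpful elaboration but not a departure from the paper's reasoning.
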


The author is grateful to V.~Manturov, D.~Ilyutko and J.~Bloom for
inspiring discussions.

\end{document}